\documentclass[11pt, a4paper]{amsart}

\usepackage{amsmath,amssymb,amsfonts,mathrsfs,amsthm,bbm,enumitem, graphicx, csquotes}

\usepackage{color}

\theoremstyle{plain}
\newtheorem{thm}{Theorem} 
\newtheorem*{thm*}{Theorem} 

\newtheorem{lemma}[thm]{Lemma}
\newtheorem{cor}[thm]{Corollary}

\theoremstyle{definition}
\newtheorem{defn}[thm]{Definition}

\theoremstyle{remark}

\newtheorem{remark}[thm]{Remark}
\newtheorem*{remark*}{Remark}
\newtheorem*{claim*}{Claim}

\newcommand{\tub}[1]{\left\{#1\right\}}

\newcommand{\sqpar}[1]{\left [ #1\right ]}

\newcommand{\tyk}{\mathcal}

\newcommand{\num}[1]{\left | #1\right |}
\newcommand{\para}[1]{\left( #1 \right )}
\newcommand{\flo}[1]{\left\lfloor #1 \right\rfloor}


\renewcommand{\phi}{\varphi}
\renewcommand{\epsilon}{\varepsilon}

\newcommand{\N}{\mathbbm{N}}
\newcommand{\Z}{\mathbbm{Z}}
\newcommand{\Q}{\mathbbm{Q}}
\newcommand{\R}{\mathbbm{R}}
\newcommand{\C}{\mathbbm{C}}

\begin{document}
\title[EVT for Hurwitz Complex CF]{Extreme Value Theory for Hurwitz Complex Continued Fractions}

\author{Maxim Kirsebom}
\address{Maxim Kirsebom, University of Hamburg, Department of Mathematics, Bundesstrasse 55, 20146 Hamburg, Germany}
\email{maxim.kirsebom@uni-hamburg.de}

\thanks{The author is deeply indebted to Seonhee Lim for the many helpful discussions and the much help and assistance in connection with this paper, much of which took place during visits to Seoul National University. Thanks also to Gerardo Gonz\'alez Robert for introducing me to this topic and for helpful discussions. Finally, thanks to the anonymous referees whose suggestions greatly improved the paper}

\begin{abstract}
	The Hurwitz complex continued fraction is a generalization of the nearest integer continued fraction. In this paper we prove various results concerning extremes of the modulus of Hurwitz complex continued fraction digits. This includes a Poisson law and an extreme value law. The results are based on cusp estimates of the invariant measure about which information is still limited. In the process, we get several results concerning extremes of nearest integer continued fractions as well.
\end{abstract}

\maketitle

\section{Introduction}

Continued fraction expansions of real numbers have long been known as an interesting and fruitful venue for applications of extreme value theory (EVT). Interesting because continued fractions digits are almost surely unbounded and have infinite expectation which raises natural questions about the behaviour of their largest digits. Fruitful because continued fraction digits are closely connected to a dynamical system with nice mixing properties and an explicit invariant measure.

We briefly recall some facts about regular continued fractions necessary for further discussion. Every $x\in\R$ may be written as a fraction of the form
\begin{equation}
\label{RCF}
x=a_0(x)+\cfrac{1}{a_1(x)+\cfrac{1}{a_2(x)+\cfrac{1}{\ddots}}}:=[a_0(x);a_1(x),a_2(x),\dots],
\end{equation}
where the (possibly finite) sequence $\tub{a_i(x)}$ consists of natural numbers which we refer to as the regular continued fraction (RCF) digits\footnote{Often the terminology \emph{partial quotients} of $x$ is used for the numbers $a_i$, however, we prefer \emph{digits}. Also, in many instances regular continued fractions are simply referred to as continued fractions. However, since we will discuss multiple different continued fraction algorithms in this paper, we will remain specific throughout.} of $x$. For simplicity we will leave out the dependence on $x$ and write $a_i$ unless the context requires specification. $a_0$ is the integer part of $x$ hence for $x\in [0,1)$ we have $a_0=0$ and only the fraction in \eqref{RCF} remains. Furthermore, the sequence $\tub{a_i(x)}$ is infinite if and only if $x$ is irrational. Since only the infinite case is of interest for extreme value behaviour and since the rational numbers make up a null set anyway, we will focus on numbers in $X:=(0,1)\backslash \Q$.  

The RCF digits of a number $x\in X$ may also be computed using the Gauss map $T:X\to X$ which is defined by
\begin{equation*}
T(x):=\tub{\frac{1}{x}}=\frac{1}{x}-\flo{\frac{1}{x}}.
\end{equation*}
Here $\tub{x}$ and $\flo{x}$ denote the fractional and integer part of $x$ respectively. The $a_i$'s are then given recursively by
\begin{equation*}
a_1:=a_1(x)=\flo{\frac{1}{x}}, \quad a_{n+1}:=a_{n+1}(x)=a_1(T^n(x)).
\end{equation*}
The Gauss map is invariant under the Gauss measure on $X$ which is given by
\begin{equation*}
\mu_G(E)=\frac{1}{\log 2}\int_E \frac{1}{1+x}\, dx
\end{equation*}
where $E\subset X$ is Lebesgue measurable. Let $\tyk{E}$ denote the $\sigma$-algebra of Lebesgue measurable subsets of $X$. As a triple, $(X,T,\mu_G)$ constitutes a dynamical systems with many nice and interesting properties. The first of these is the explicit form of $\mu_G$. For many dynamical system it is possible to prove the existence of an invariant measure, however there are few systems for which this measure is known explicitly. Interestingly, the measure was first stated by Gauss in 1800 \cite{Brezinski} but he gave no explanation for how he discovered it and his rationale remains a mystery to this day. Equipped with the Gauss measure we may think of $X$ as a probability space and the $a_i:X\to\N$ as a sequence of random variables.
The second property of $(X,T,\mu_G)$ which we highlight is a quantitative mixing property with respect to sets belonging to certain sub-$\sigma$-algebras of $\tyk{E}$. We define the property in a general setting.
\begin{defn}[$\psi$-mixing]
	Let $(\Omega,\tyk{B},\mu)$ denote a probability space and let $\xi_j:\Omega\to\R$ denote a stationary sequence of random variables. For natural numbers $u<v$, let $\tyk{M}_{u,v}$ denote the smallest $\sigma$-algebra for which $\xi_u,\dots,\xi_v$ are measurable. Then $\xi_j$ is said to be $\psi$-mixing if for any sets $A\in \tyk{M}_{1,l}$ and $B\in \tyk{M}_{l+n,\infty}$ we have 
	\begin{equation*}
	\num{\mu(A\cap B)-\mu(A)\mu(B)}\leq \psi(n)\mu(A)\mu(B)
	\end{equation*} 
	where $\psi:\N\to \R$ is a function for which $\psi(n)\to 0$ as $n\to\infty$.
\end{defn}
In the case of RCF digits, the random variables $a_{j}=a_1(T^{j-1}(x))$ form a stationary sequence due to the invariance of the Gauss measure with respect to $T$. The $a_j$'s are known to be $\psi$-mixing with respect to the Gauss measure and the function $\psi$ is known to vanish at an exponential rate. This follows from independent work of Kuzmin and Levy in the late 1920's, see for example \cite{Iosifescu2} for the proof and a discussion of its history.

\subsection{Extreme value theory for RCF digits}
Extreme value theory for RCF digits came to life in the 1970's, first through Galambos in 1972. He used the Gauss measure and $\psi$-mixing to prove the following extreme value law.
\begin{thm}[\cite{Galambos}]
	\label{GalambosEVL}
	Let $M_n=M_n(x):=\max_{1\leq i \leq n} a_n(x)$. Then for $r>0$ we have
	\begin{equation}
	\label{RCF_EVL}
	\lim_{n\to\infty}\mu_G\para{M_n\leq \frac{nr}{\log 2}}=e^{-\frac{1}{r}}.
	\end{equation}
\end{thm}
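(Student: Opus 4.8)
The plan is to show that the events $\{a_i > nr/\log 2\}$ behave, in the limit, like independent rare events, so that the probability that none of them occurs among $i=1,\dots,n$ converges to $\exp(-\lambda)$ where $\lambda$ is the limiting expected number of occurrences. First I would compute the one-digit tail: for a threshold $t=t_n:=nr/\log 2$, the set $\{x\in X: a_1(x) > t\}$ is the union of the cylinder intervals $\{x: a_1(x)=m\}=(1/(m+1),1/m)$ over integers $m>t$, so
\[
\mu_G(a_1 > t)=\frac{1}{\log 2}\int_{0}^{1/\lfloor t\rfloor}\frac{dx}{1+x}
=\frac{1}{\log 2}\log\!\Big(1+\frac{1}{\lfloor t\rfloor}\Big)
=\frac{1}{t\log 2}+O(t^{-2}).
\]
With $t=t_n=nr/\log 2$ this gives $\mu_G(a_1 > t_n)=\tfrac{1}{nr}+O(n^{-2})$, so the expected number of exceedances among the first $n$ digits is $n\mu_G(a_1>t_n)\to 1/r$. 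This identifies the candidate limit $\lambda=1/r$, matching $e^{-1/r}$ in \eqref{RCF_EVL}. By stationarity, $\mu_G(M_n\le t_n)=\mu_G(\bigcap_{i=1}^n\{a_i\le t_n\})$, and the goal becomes showing this converges to $e^{-1/r}$.

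Next I would exploit $\psi$-mixing to decouple the events. The standard route is a blocking argument: partition $\{1,\dots,n\}$ into $\ell_n$ blocks of length $p_n$ separated by gaps of length $q_n$, with $p_n,q_n\to\infty$, $q_n/p_n\to 0$, $\ell_n p_n\sim n$, and $\psi(q_n)\to 0$. The event $\{a_i\le t_n$ on block $k\}$ lies in $\tyk{M}_{\cdot,\cdot}$ for that block; applying the $\psi$-mixing inequality repeatedly (telescoping over the $\ell_n$ blocks) shows
\[
\Big|\mu_G(M_n\le t_n)-\prod_{k=1}^{\ell_n}\mu_G\big(\textstyle\max_{i\in B_k}a_i\le t_n\big)\Big|
\le \big((1+\psi(q_n))^{\ell_n}-1\big)\longrightarrow 0,
\]
provided $\ell_n\psi(q_n)\to 0$, which is arranged since $\psi$ decays exponentially while $\ell_n$ grows only polynomially. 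One also checks the digits in the omitted gaps contribute negligibly: the probability that some digit in a gap exceeds $t_n$ is at most $\ell_n q_n\mu_G(a_1>t_n)=O(q_n\ell_n/n)=O(q_n/p_n)\to 0$. Finally, by stationarity each block factor equals $\mu_G(M_{p_n}\le t_n)$, and since $p_n/n\to 0$ one has $\mu_G(M_{p_n}\le t_n)=1-p_n\mu_G(a_1>t_n)+o(p_n/n)=1-\tfrac{1}{nr}+o(n^{-1})$ after a short inclusion–exclusion estimate (using $\psi$-mixing again, or crude Bonferroni bounds, to control $\mu_G(a_i>t_n, a_j>t_n)$ within a block); hence $\prod_{k=1}^{\ell_n}\mu_G(M_{p_n}\le t_n)=(1-\tfrac{1}{nr}+o(n^{-1}))^{\ell_n}\to e^{-1/r}$, using $\ell_n p_n\sim n$.

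The main obstacle, and the place where some care is genuinely needed, is the control of the within-block pair probabilities $\mu_G(a_i>t_n\text{ and }a_j>t_n)$ and the choice of block parameters so that all three error terms — the mixing error $\ell_n\psi(q_n)$, the gap error $q_n/p_n$, and the inclusion–exclusion remainder inside a block — vanish simultaneously. For well-separated indices inside a block, $\psi$-mixing bounds the pair probability by $(1+\psi(|i-j|))\mu_G(a_1>t_n)^2$; for nearby indices one needs the cruder bound that the pair probability is at most $\mu_G(a_1>t_n)$ summed over the few close pairs, contributing $O(p_n\mu_G(a_1>t_n))=O(p_n/n)\to 0$. Once a consistent choice such as $p_n=\lfloor n/\log n\rfloor$, $q_n=\lfloor\log n\rfloor$ (so $\ell_n\sim\log n$) is fixed, all estimates close and the limit $e^{-1/r}$ follows.
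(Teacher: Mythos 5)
Your overall plan is sound, but it is not the route this paper has in mind: the paper treats Theorem \ref{GalambosEVL} as a cited result and, in its own development, obtains laws of this type by verifying the tail condition \eqref{DIG1} and feeding it into Iosifescu's Poisson law for stationary $\psi$-mixing sequences (Theorem \ref{IosifescuPoisson}), the extreme value law being the case $j=0$. You instead reprove the extreme value law from scratch by a blocking/decoupling argument, which is essentially the classical route Galambos himself took. Both approaches rest on the same two inputs: exponential $\psi$-mixing of the digit sequence and the Gauss-measure tail asymptotic $\mu_G(a_1>t)=\frac{1}{t\log 2}+O(t^{-2})$ (your computation of this is correct up to a harmless off-by-one: the union of the cylinders $\tub{a_1=m}$ over integers $m>t$ is the interval $\para{0,\tfrac{1}{\flo{t}+1}}$, not $\para{0,\tfrac{1}{\flo{t}}}$; the asymptotics are unaffected). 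What the black-box route buys is the full Poisson law and the laws for $k$-th maxima at no extra cost; what your route buys is a self-contained proof of the $j=0$ case.

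Two quantitative points in your sketch need repair, though both are fixable with tools you already invoke. First, your fallback treatment of nearby pairs does not close: the inclusion--exclusion remainder per block must be $o(p_n/n)=o(1/\ell_n)$, since it is compounded over $\ell_n$ blocks; the crude bound $\mu_G(a_i>t_n,\,a_j>t_n)\le\mu_G(a_1>t_n)$ applied to even a bounded number of close pairs per index contributes $O(p_n/n)$, which is of the \emph{same} order as the main term $p_n\mu_G(a_1>t_n)$ and would distort the constant in the exponent. In fact no separate treatment of close pairs is needed: the $\psi$-mixing inequality applies already with gap $1$, so $\mu_G(a_i>t_n,\,a_j>t_n)\le(1+\psi(1))\,\mu_G(a_1>t_n)^2=O(n^{-2})$ uniformly over all $i\neq j$, and the whole remainder is $O(p_n^2/n^2)=o(p_n/n)$. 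Second, your final display is inconsistent with the sentence preceding it: each block factor is $1-p_n\mu_G(a_1>t_n)+o(p_n/n)=1-\frac{p_n}{nr}(1+o(1))\approx 1-\frac{1}{\ell_n r}$, not $1-\frac{1}{nr}+o(n^{-1})$; with your choice $p_n=\flo{n/\log n}$, $\ell_n\sim\log n$, the factor as written would give $\para{1-\frac{1}{nr}}^{\ell_n}\to 1$, whereas the corrected factor gives $\para{1-\frac{1}{\ell_n r}+o(1/\ell_n)}^{\ell_n}\to e^{-1/r}$ as required.
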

Readers familiar with extreme value theory will recognize this as a Frechet distribution with extremal index equal to 1. Galambos generalized this result in \cite{Galambos2}, showing that $\mu_G$ may be replaced with any measure absolutely continuous with respect to Lebesgue. In 1977, Iosifescu gave the following more general result, proving a Poisson law for general stationary, $\psi$-mixing sequences. 
\begin{thm}[\cite{Iosifescu1}]\label{IosifescuPoisson}
	Let $\xi_j:\Omega\to\R$ denote a stationary and $\psi$-mixing sequence of random variables. For $\omega\in\Omega$ and $v\in\R$, set
	\begin{equation*}
	S_n(\omega,v):=\#\tub{j\in\tub{1,\dots,n}:\xi_j(\omega)>v}.
	\end{equation*}
	Assume that there exists a sequence of functions $u_n:\R\to\R$ such that
	\begin{equation}
	\label{DIG1}
	\lim_{n\to\infty} n\mu(\xi_1(\omega)>u_n(r))=\tau(r)\quad ,\quad r\in\R
	\end{equation}
	for some real-valued function $\tau$. Then, for all $r\in\R$ and any $j\in\N$,
	\begin{equation} 
	\lim_{n\to\infty} \mu\tub{S_n(\omega,u_n(r))=j}=e^{-\tau(r)}\frac{\tau(r)^j}{j!}.
	\end{equation}
\end{thm}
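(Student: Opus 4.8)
The plan is to establish the Poisson limit by verifying the two classical conditions of extreme value theory for stationary sequences — Leadbetter's $D(u_n)$ and $D'(u_n)$ conditions — and then invoking the standard theorem that these two conditions, together with the hypothesis $n\mu(\xi_1 > u_n(r)) \to \tau(r)$, yield convergence of the point process of exceedances to a Poisson process of intensity $\tau(r)$, hence in particular the stated limit for $\mu\{S_n(\omega,u_n(r)) = j\}$. The key observation is that $\psi$-mixing is a very strong mixing condition that implies both $D(u_n)$ and $D'(u_n)$ essentially for free, so the bulk of the argument is extracting the quantitative estimates from the single inequality $|\mu(A\cap B) - \mu(A)\mu(B)| \le \psi(n)\mu(A)\mu(B)$.

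First I would fix $r$, write $u_n = u_n(r)$ for brevity, and set $p_n := \mu(\xi_1 > u_n)$, so that $np_n \to \tau := \tau(r)$. The standard blocking argument divides $\{1,\dots,n\}$ into $k_n$ blocks of length roughly $n/k_n$, separated by gaps of length $\ell_n$, with $k_n \to \infty$, $\ell_n \to \infty$, $k_n\ell_n/n \to 0$ and $k_n\psi(\ell_n) \to 0$ (possible since $\psi(\ell)\to 0$). Using $\psi$-mixing repeatedly across the $k_n$ gaps, one shows that the probability of no exceedance in $\{1,\dots,n\}$ differs from the $k_n$-th power of the corresponding single-block probability by an error controlled by $k_n\psi(\ell_n) \to 0$; this is precisely the $D(u_n)$-type decoupling, and here it is cleaner than in the general $D(u_n)$ setting because the mixing bound is multiplicative rather than additive. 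Next I would verify $D'(u_n)$: one must show $\limsup_n n\sum_{j=2}^{\lfloor n/k_n\rfloor} \mu(\xi_1 > u_n,\ \xi_j > u_n) \to 0$. For $j$ past the mixing range this follows from $\mu(\xi_1>u_n,\xi_j>u_n) \le (1+\psi(j-1))p_n^2$ and $n\cdot (n/k_n)\cdot p_n^2 \asymp (np_n)^2/k_n \to 0$; for the finitely many small $j$ one uses stationarity and $\psi$-mixing at lag $1$ to bound each term by $Cp_n^2$ as well, and again the sum is $O((np_n)^2 \cdot \text{const}/n) \to 0$. Assembling these, $\mu(S_n = 0) = \mu(\text{no exceedance}) \to e^{-\tau}$.

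To get the full Poisson law for arbitrary $j$, rather than redoing the computation I would appeal to the general equivalence (Leadbetter–Lindgren–Rootzén): once $D(u_n)$ and $D'(u_n)$ hold and $np_n \to \tau$, the exceedance point process converges in distribution to a Poisson process with intensity $\tau$ on $(0,1]$, which gives $\mu\{S_n(\omega,u_n) = j\} \to e^{-\tau}\tau^j/j!$ for every $j \in \N$ simultaneously. Alternatively, a direct inclusion–exclusion / factorial-moment argument works: one shows $E\big[\binom{S_n}{j}\big] \to \tau^j/j!$ by the same block-decoupling plus $D'(u_n)$ estimates (the $\psi$-mixing error terms telescope multiplicatively), and then invokes the method of moments for the Poisson distribution, whose moments determine it uniquely. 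I expect the main obstacle to be purely bookkeeping: choosing the block and gap sequences $k_n,\ell_n$ so that all four constraints hold simultaneously, and carefully tracking how the multiplicative errors $\prod(1\pm\psi(\ell_n))$ accumulate over $k_n$ blocks without blowing up — this requires $k_n\psi(\ell_n)\to 0$, which is exactly where the hypothesis $\psi(n)\to 0$ is used, and no rate on $\psi$ is needed. The genuinely substantive content (the mixing input) is handed to us by hypothesis, so the proof is a careful but standard application of the Leadbetter theory.
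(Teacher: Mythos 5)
This statement is not proved in the paper at all: it is quoted from Iosifescu's 1977 article and used as a black box, so there is no internal argument to compare against; the relevant comparison is with Iosifescu's original proof, which (as the paper recounts) repairs Doeblin's direct computation using ideas from Galambos, i.e.\ a self-contained blocking/inclusion--exclusion argument tailored to $\psi$-mixing sequences. Your route is different but sound: you verify Leadbetter's conditions $D(u_n)$ and $D'(u_n)$ and then invoke the standard Leadbetter--Lindgren--Rootz\'en theorem on convergence of the exceedance point process (or, alternatively, convergence of all factorial moments plus moment determinacy of the Poisson law). The verifications do go through: $D(u_n)$ is immediate since the $\psi$-mixing inequality bounds the discrepancy of the relevant events by $\psi(\ell)$, and for $D'(u_n)$ note that $\psi$ is real-valued with $\psi(n)\to 0$, hence $\Psi:=\sup_n\psi(n)<\infty$, so stationarity and mixing at lag $j-1$ give $\mu\tub{\xi_1>u_n,\ \xi_j>u_n}\leq (1+\Psi)\,\mu\tub{\xi_1>u_n}^2$ uniformly in $j\geq 2$, which yields the required $\lim_{k\to\infty}\limsup_n n\sum_{j\leq n/k}\mu\tub{\xi_1>u_n,\xi_j>u_n}=0$; your phrasing about treating ``the finitely many small $j$'' separately is unnecessary once this uniform bound is made explicit. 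Two points to state carefully if you write this up: the final step for general $j$ genuinely needs the point-process (or factorial-moment) version of the theorem, not just the $j=0$ extreme value law, and the multiplicative errors $(1+\psi(\ell_n))^{k_n}$ require choosing $k_n\psi(\ell_n)\to 0$, which your constraints do ensure. What your approach buys is brevity by outsourcing the combinatorics to standard EVT machinery; Iosifescu's approach buys a self-contained proof within the $\psi$-mixing framework, which is presumably why the paper simply cites it.
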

It was strightforward for Iosifescu to apply this theorem to RCF digits (\cite[Theorem 2]{Iosifescu1}) since $\psi$-mixing for RCF digits was known and property \eqref{DIG1} follows easily from the form of the Gauss measure. Similar to Galambos, he gave an argument for why the Poisson law holds for any measure which is absolutely continuous with respect to Lebesgue. Note that one obtains Galambos' theorem by setting $j=0$ in Iosifescu's result for RCF digits. 
It is interesting to note that Iosifescu's result for RCF digits appeared as early as 1940 in a paper by Doeblin \cite{Doeblin}. However, as Iosifescu explains in \cite{Iosifescu1}, Doeblin's proof contains a mistake. Galambos was unaware of Doeblin's result when proving his extreme value law, and Iosifescu obtained his proof by applying ideas from Galambos paper to fix Doeblin's mistake.

The era offered more results concerning the maximum of RCF digits. Galambos proved an iterated logarithm type theorem for the maximal RCF digits \cite{Galambos3}, which was improved by Philipp \cite{Philipp1} to give a complete answer to a conjecture of Erd\H{o}s. Also in \cite{Philipp1}, an upper bound on the rate of convergence in \eqref{RCF_EVL} was provided. Diamond and Vaaler \cite{DiamondVaaler} showed that the partial maximum is responsible for the failure of the law of large numbers for RCF digits.

The topic of extreme value statistics for RCF digits has gained interest again in recent years. Philipp's rate of convergence was improved by Ghosh, Kirsebom and Roy \cite{GhoshKirsebomRoy}, while a refinement of Iosifescu's theorem was presented by Zweim\"uller in \cite{Zweimuller}. Of a slightly different flavour, Chang and Chen \cite{ChangChen} investigated the Hausdorff dimension of certain sets defined via the largest RCF digits.

\subsection{Extreme value theory for other CF algorithms}

The regular continued fractions considered up to this point are by far not the only continued fractions in existence. For RCF digits we have a good understanding of the statistical behaviour of largest digits. For other CF algorithms however, results of this nature are scarce and many interesting questions remain open. To our knowledge, the only works on extreme value theory for other algorithms are the papers by Chang and Ma \cite{ChangMa} which considers the case of Oppenheim CF, by Shen, Xu and Jing \cite{ShenXuJing} which studies the case of continued fraction defined over the field of formal Laurent series and by Nakada and Natsui \cite{NakadaNatsui} which investigates fibred systems. In a different but related direction, González Robert \cite{Robert1} recently proved a Borel-Bernstein Theorem for Hurwitz complex continued fractions. The methods used in \cite{ChangMa} are somewhat different to the ones described for RCF's since the invariant measure associated to Oppenheim CF is infinite. Instead, the metric theory is developed for the Lebesgue measure with respect to which the associated dynamical system is not $\psi$-mixing. Also \cite{ShenXuJing} is somewhat different in that an iterated logarithm type result is proven as opposed to a distributional result.  

To the contrary, \cite{NakadaNatsui} takes the same approach as described above and applies it to fibred systems. Many CF algorithms, including several complex CF algorithms, satisfy the conditions for being fibred systems, see \cite{Schweiger1} for some examples. 

Under certain assumptions, fibred systems were proven by Waterman \cite{Waterman} to admit an invariant measure absolutely continuous with respect to Lebesgue and under further assumptions Schweiger \cite{Schweiger1}, \cite{Schweiger2} showed that the fibred system is $\psi$-mixing with respect to this invariant measure. Nakada and Natsui use this fact to formulate general sufficient conditions on the invariant measure in order to get analogues of Theorem \ref{GalambosEVL} as well as the results in \cite{DiamondVaaler} and \cite{Philipp1}. 

They further proved that these various assumptions are satisfied for the Jacobi-Perron multidimensional CF algorithm, but for many other CF algorithms this is not known.

The main aim of this paper is to develop extreme value theory for complex continued fractions, more specifically the variant introduced by Hurwitz \cite{Hurwitz}. In the process we \enquote{pick up} analogue results for nearest integer continued fractions which we present first before continuing to the complex continued fractions and our main results.

\subsection{Extreme value theory for nearest integer continued fractions}

This subsection serves two purposes. First, it allows us to state some new results concerning extreme value theory for nearest integer continued fractions (NICF). These results are new in the sense that they appear not to have been stated elsewhere before. However, aside from a small calculation, the proofs simply combine results proven elsewhere, hence in that sense the novelty is limited. Second, the complex continued fractions which are the main focus of this paper are a generalization of NICF for real numbers. Hence this subsection serves as a stepping stone between the RCF's and the Hurwitz complex continued fractions. 

NICF's work similar to RCF's, the main difference being that we round to the nearest integer. We consider the fundamental domain $X=\left [-\frac12,\frac12\right)$ and for an $x\in X$ we take its inverse $\frac{1}{x}$ and subtract the integer nearest to it. This brings us back to $X$ where we repeat the process. This enables us to write $x$ as
\begin{equation}
\label{NICF_1}
x=\cfrac{1}{\sqpar{\frac{1}{x}}_{\textup{N}}+\cfrac{1}{\sqpar{\frac{1}{\frac{1}{x}-\sqpar{\frac{1}{x}}_{\textup{N}}}}_{\textup{N}}+\cfrac{1}{\ddots}}},
\end{equation}
where $\sqpar{\cdot}_N$ denotes the nearest integer function. Similar to the RCF, the NICF digits may be generated through a dynamical system. However, the layout of the digits will vary slightly from \eqref{NICF_1}, the benefit being that our transformation gets a simpler expression. We define the transformation $T_{\text{N}}:X\to X$ by
\begin{equation*}
T_{\text{N}}(x)=\begin{cases}
\frac{\epsilon}{x}-\flo{\frac{\epsilon}{x}+\frac12} & , \enskip x\neq 0\\
0 & , \enskip x=0.
\end{cases}
\end{equation*}
where $\epsilon$ denotes the sign of $x$. Using this definition the modulus of the nearest integers and their signs are recorded in separate digits. The NICF expansion of $x\in X$ then becomes 
\begin{equation}
\label{NICF_2}
x=\cfrac{\epsilon_1}{b_1(x)+\cfrac{\epsilon_2}{b_2(x)+\cfrac{\epsilon_3}{\ddots}}},
\end{equation}
where 
\begin{equation*}
b_1(x)=\flo{\num{\frac{1}{x}+\frac12}}, \quad b_n(x)=b_1(T_{\text{N}}^{n-1}(x))=\flo{\num{\frac{1}{T_{\text{N}}^{n-1}(x)}+\frac12}}
\end{equation*}
and
\begin{equation*}
\epsilon_n=\textup{sgn}\para{T_{\text{N}}^{n-1}(x)}.
\end{equation*}
In this notation the $\epsilon_n$'s record whether the nearest integer digits from \eqref{NICF_1} change sign. Indeed the NICF digits in \eqref{NICF_1} may be written as 
\begin{equation*}
	\epsilon_1b_1(x), (\epsilon_1\epsilon_2)b_2(x),\dots,(\epsilon_1\dots \epsilon_n)b_n(x),\dots
\end{equation*}

The map $T_{\text{N}}$ admits an invariant measure $\mu_N$ on $\left [-\frac12,\frac12\right)$ whose density is given by
\begin{equation*}
\rho(x)=\begin{cases}
\frac{1}{\log(G)}\frac{1}{G+x},&\quad \text{for } x\in \left[0,\frac12\right )\\
\frac{1}{\log(G)}\frac{1}{G+1+x},&\quad \text{for } x\in \left[-\frac12, 0\right )
\end{cases}
\end{equation*}
where $G=\frac{\sqrt{5}+1}{2}$ is the golden ratio. It is also known to be $\psi$-mixing with respect to this measure. Indeed, the function $\psi(n)$ is known to decay faster than $O\para{\theta^n}$ where $\theta=\frac{3}{4}$. Both of these results were proven by Rieger (\cite{Rieger1,Rieger2}).

Consider now $b_n:X\to\N$ as a sequence or random variables. By $\mu$-invariance $b_n$ forms a stationary sequence. Define
\begin{equation*}
S_n(x,r)=\#\tub{j\in\tub{1,\dots,n}:b_j(x)>r}.
\end{equation*}
Now, the only property missing in order to apply Theorem \ref{IosifescuPoisson} to $S_n(x,r)$ is the property \eqref{DIG1}. However, this follows from the ensuing calculation. We shorten notation by writing $\tub{b_n>j}$ instead of $\tub{x\in X:b_n(x)>j}$. Also, we use the notation $\sim$ to indicate that the related quantities have the same limit. We obtain
\begin{align*}
\mu_N\tub{b_n>j}&=\mu_N\tub{b_1>j}\\
&=\mu_N\tub{\flo{\num{\frac{1}{x}+\frac12}}>j}\\
&\sim\mu_N\tub{\num{\frac{1}{x}}>j}\\
&=\frac{1}{\log G}\para{\int_{-\frac1j}^0 \frac{1}{G+1+x}\,dx+\int_{0}^{\frac1j} \frac{1}{G+x}\,dx}\\
&=\frac{1}{\log G}\para{\log\para{1+\frac{1}{Gj}}-\log\para{1+\frac{1}{(G+1)j}}}\\
&=\frac{1}{\log G}\para{\frac{1}{G}+\frac{1}{G+1}}\frac{1}{j}+O\para{\frac{1}{j^2}}\\
&=\frac{1}{\log G}\frac{1}{j}+O\para{\frac{1}{j^2}}.
\end{align*}
In the above, the fact that we did not change the limit by ignoring the floor function and the added $\frac12$ requires a small calculation but is intuitively clear. Futhermore, we made use of the power series expansion of $\log(1+x)$. From this we see that
\begin{equation}\label{cuspestiNICF}
\lim_{n\to\infty}	n\mu_N\tub{b_n>\frac{nr}{\log G}}=\frac{1}{r}
\end{equation}
which verifies \eqref{DIG1} for $u_n(r)=\frac{nr}{\log G}$. Hence we have proven the Poisson law of exceedances for the NICF digits.
\begin{thm}\label{thm:PoissonNICF}
	Let $u_n(r)=\frac{nr}{\log G}$. For all $r>0$ and any $j\in\N$ 
	\begin{equation}
	\lim_{n\to\infty} \mu_N \tub{x\in X:S_n(x,u_n(r))=j}= e^{-\para{\frac{1}{r}}} \frac{r^{-j}}{j!}.
	\end{equation}
\end{thm}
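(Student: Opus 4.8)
The plan is to verify the hypotheses of Theorem~\ref{IosifescuPoisson} for the sequence $\tub{b_n}$ on the probability space $(X,\mu_N)$ and then invoke that theorem directly. Three things must be checked: that $\tub{b_n}$ is stationary, that it is $\psi$-mixing, and that the asymptotic condition \eqref{DIG1} holds for a suitable choice of $u_n$. Stationarity is immediate from $b_n(x)=b_1(T_{\text{N}}^{n-1}(x))$ together with the $T_{\text{N}}$-invariance of $\mu_N$. The $\psi$-mixing property, with a rate function decaying faster than $(3/4)^n$, is Rieger's result \cite{Rieger1,Rieger2}; although that statement is phrased in terms of the cylinder $\sigma$-algebras, which are generated by the pairs $(b_j,\epsilon_j)$, the $\sigma$-algebra $\tyk{M}_{u,v}$ generated by $b_u,\dots,b_v$ alone is contained in the former, and the defining inequality of $\psi$-mixing passes to sub-$\sigma$-algebras, so $\tub{b_n}$ is $\psi$-mixing.

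The only step requiring genuine computation is \eqref{DIG1}, and this is precisely the content of the displayed calculation preceding the theorem, ending in \eqref{cuspestiNICF}. The one delicate point there is the replacement of $\mu_N\tub{\flo{\num{\frac1x+\frac12}}>j}$ by $\mu_N\tub{\num{\frac1x}>j}$ without affecting the limit. I would make this rigorous by a sandwich argument: the triangle inequality shows that $\num{\frac1x}>j+2$ implies $\flo{\num{\frac1x+\frac12}}>j$, which in turn implies $\num{\frac1x}>j$, whence
\begin{equation*}
\mu_N\tub{\num{\tfrac1x}>j+2}\leq\mu_N\tub{b_1>j}\leq\mu_N\tub{\num{\tfrac1x}>j},
\end{equation*}
and both outer quantities equal $\tfrac{1}{\log G}\cdot\tfrac1j+O(j^{-2})$ by the explicit integration of $\rho$ carried out in the excerpt. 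Hence $\mu_N\tub{b_1>j}=\tfrac{1}{\log G}\cdot\tfrac1j+O(j^{-2})$. Taking $u_n(r)=\tfrac{nr}{\log G}$ and using that $\tub{b_1>u_n(r)}=\tub{b_1>\flo{u_n(r)}}$ since $b_1$ is integer-valued, multiplication by $n$ gives $n\,\mu_N\tub{b_1>u_n(r)}\to\tfrac1r$ (the error term contributes $n\cdot O(n^{-2})\to0$), which is exactly \eqref{DIG1} with $\tau(r)=\tfrac1r$; this is the statement \eqref{cuspestiNICF}.

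With all hypotheses verified, Theorem~\ref{IosifescuPoisson} applies and yields, for every $r>0$ and $j\in\N$,
\begin{equation*}
\lim_{n\to\infty}\mu_N\tub{x\in X:S_n(x,u_n(r))=j}=e^{-\tau(r)}\frac{\tau(r)^j}{j!}=e^{-1/r}\,\frac{r^{-j}}{j!},
\end{equation*}
as claimed. The main (essentially the only) obstacle is the bookkeeping in the cusp estimate, i.e.\ controlling the error introduced by the nearest-integer rounding that defines $b_1$; once \eqref{cuspestiNICF} is in hand, the theorem follows immediately from Iosifescu's Poisson law.
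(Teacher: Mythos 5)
Your proposal is correct and follows essentially the same route as the paper: verify stationarity, invoke Rieger's $\psi$-mixing, establish \eqref{cuspestiNICF} from the explicit density, and apply Theorem~\ref{IosifescuPoisson}. Your sandwich argument simply makes rigorous the ``small calculation'' (removing the floor and the $\tfrac12$) that the paper declares intuitively clear, and your handling of non-integer thresholds via $\tub{b_1>u_n(r)}=\tub{b_1>\flo{u_n(r)}}$ is a harmless refinement.
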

This leads to the immediate corollaries
\begin{cor}\label{cor:NICF}
	Let $M_n(x):=\max_{1\leq i \leq n} \num{b_i(x)}$ and let $M_n^{(k)}(x)$ denote the $k$'th largest element among $\tub{b_1(x),\dots, b_n(x)}$. 
	
	For all $r>0$ 
	\begin{equation*}
	\lim_{n\to\infty} \mu_N \tub{x\in X: M_n^{(k)}\leq \frac{nr}{\log G}}=e^{-\frac{1}{r}}\sum_{j=0}^{k-1} \frac{r^{-j}}{j!}.
	\end{equation*}
	
	In particular,
	\begin{equation*}
	\lim_{n\to\infty} \mu_N \tub{x\in X: M_n(x)\leq \frac{nr}{\log G}}=e^{-\frac{1}{r}}.
	\end{equation*}
\end{cor}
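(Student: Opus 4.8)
The plan is to reduce the statement to Theorem~\ref{thm:PoissonNICF} by expressing the level set of the $k$-th order statistic in terms of the counting variable $S_n$. First I would note that the $b_i$ are positive integers, so $\num{b_i}=b_i$ and $M_n^{(k)}$ is genuinely the $k$-th largest among $b_1(x),\dots,b_n(x)$ when the latter are listed in decreasing order. The key elementary observation is that, for any threshold $v$ and any $n\geq k$, the $k$-th largest of $b_1,\dots,b_n$ is $\leq v$ if and only if strictly fewer than $k$ of these digits exceed $v$; equivalently,
\[
\tub{x\in X: M_n^{(k)}(x)\leq v}=\bigsqcup_{j=0}^{k-1}\tub{x\in X: S_n(x,v)=j},
\]
a disjoint union, so the measure of the left-hand side equals $\sum_{j=0}^{k-1}\mu_N\tub{S_n(x,v)=j}$.

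Next I would substitute $v=u_n(r)=\frac{nr}{\log G}$ and pass to the limit. Since $k$ is fixed the union is finite, so the limit commutes with the sum and Theorem~\ref{thm:PoissonNICF} applies termwise:
\[
\lim_{n\to\infty}\mu_N\tub{x\in X:M_n^{(k)}\leq \tfrac{nr}{\log G}}=\sum_{j=0}^{k-1}\lim_{n\to\infty}\mu_N\tub{x\in X:S_n(x,u_n(r))=j}=e^{-\frac1r}\sum_{j=0}^{k-1}\frac{r^{-j}}{j!}.
\]
Setting $k=1$ collapses the sum to the single $j=0$ term and yields $\lim_{n\to\infty}\mu_N\tub{x\in X:M_n(x)\leq \frac{nr}{\log G}}=e^{-1/r}$, the extreme value law for the maximal NICF digit.

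There is essentially no obstacle here: the only point requiring any care is the set identity displayed above, and even that is routine once one recognizes that ``the $k$-th largest digit is $\leq v$'' and ``at most $k-1$ digits are $>v$'' describe the same event. No interchange of a limit with an infinite series is needed, so the conclusion follows directly from Theorem~\ref{thm:PoissonNICF} without further analytic input.
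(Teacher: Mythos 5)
Your proposal is correct and follows essentially the route the paper intends: the corollary is treated there as an immediate consequence of Theorem~\ref{thm:PoissonNICF}, via exactly the identity that $\tub{M_n^{(k)}\leq v}$ is the disjoint union of $\tub{S_n(x,v)=j}$ for $j=0,\dots,k-1$, summed termwise in the limit. Nothing further is needed.
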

The last result is a direct analogue of Galambos' Theorem for RCF, the difference appearing only in the constant used to normalize the maximum.
\begin{remark}
	When comparing the RCF with the NICF of certain real numbers, it appears plausible that the behaviour of their largest digits should be similar. Taking $\pi$ as an example we have the two different expansions\footnote{See https://oeis.org/A001203 and https://oeis.org/A133593 for more digits of either expansion.}
	\begin{align*}
	\pi_{RCF}&=[3; 7, 15, 1, 292, 1, 1, 1, 2, 1, 3, 1, 14, 2, 1, 1, 2, 2, 2, 2, 1, 84, 2, 1, 1,\dots]\\
	\pi_{NICF}&=[3; 7, 16, 294, 3, 4, 5, 15, 3, 2, 2, 2, 2, 3, 85, 3, 2, 15, 3, 14, 5, 2, 6, 6,\dots].
	\end{align*} 
	One observes a certain similarity in the larger digits, say 292 and 294 as well as 84 and 85. A big difference is the lack of 1's in the NICF expansion leading to shorter intervals between the large digits. Heuristically this indicates that the partial maximum for the NICF digits should grow faster than the partial maximum for RCF digits. This is indeed reflected in the extreme value laws for the two expansions. In the NICF case the normalizing sequence $nr$ must be multiplied with the larger constant $(\log G)^{-1}>(\log 2)^{-1}$ in order to obtain the same distribution as for the RCF.  
\end{remark}

We also obtain a rate of convergence for the limits in Theorem \ref{thm:PoissonNICF} and Corollary \ref{cor:NICF}. Let $l_n$ be a sequence which satisfies
\begin{equation*}
l_n \theta^{l_n}=n.
\end{equation*}
where again $\theta=\frac{3}{4}$. Note that in particular $l_n=o(\log n)$.
\begin{thm}\label{thm:conrateNICF}
	For all $k\in \N$ and all $r>0$, the rate of convergence in \eqref{ComplexPoisson}, \eqref{Mklimit} and \eqref{eq:ComplexGalambos} is bounded by
	\begin{equation*}
	O\para{\frac{1}{\min\para{r,r^2}}\frac{l_n}{n}}.
	\end{equation*}
\end{thm}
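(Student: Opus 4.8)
The plan is to upgrade the calculations behind Theorem~\ref{thm:PoissonNICF} and Corollary~\ref{cor:NICF} to quantitative form, using the two inputs available for the NICF system: Rieger's exponential $\psi$-mixing of $(b_n)$ with respect to $\mu_N$, so that $\psi(m)=O(\theta^m)$ with $\theta=\tfrac34$, and the refined cusp estimate $\mu_N\tub{b_1>j}=\tfrac{1}{j\log G}+O(j^{-2})$ established in the computation preceding Theorem~\ref{thm:PoissonNICF}. Since $\tub{M_n^{(k)}\le v}=\tub{S_n(x,v)\le k-1}=\bigcup_{j=0}^{k-1}\tub{S_n(x,v)=j}$, it is enough to bound $\abs{\mu_N\tub{S_n(x,u_n(r))=j}-e^{-1/r}r^{-j}/j!}$ for each fixed $j$, with an implied constant depending on $j$ (hence on $k$); the statements for $M_n^{(k)}$ and $M_n$ then follow by adding finitely many such bounds.

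First I would pin down the first-order term. With $p=p_n(r):=\mu_N\tub{b_1>u_n(r)}$ and $u_n(r)=nr/\log G\to\infty$, repeating the computation preceding Theorem~\ref{thm:PoissonNICF} but now retaining the $O(j^{-2})$ error, and disposing of the floor $\flo{\cdot}$ and the $+\tfrac12$ by noting that $\tub{b_1>j}$ and $\tub{\num{1/x}>j}$ differ by two $x$-intervals of length $O(j^{-2})$ on which the density of $\mu_N$ is bounded, gives $p=\tfrac{1}{nr}+O\para{(nr)^{-2}}$, hence $np=\tfrac1r+O\para{\tfrac{1}{n\min(r,r^2)}}$. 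Since $\tau\mapsto e^{-\tau}\tau^j/j!$ is Lipschitz on $[0,\infty)$, it then suffices to compare $\mu_N\tub{S_n(x,u_n(r))=j}$ with $e^{-np}(np)^j/j!$.

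For the comparison, take first $j=0$, so $q_n:=\mu_N\tub{S_n(x,u_n(r))=0}=\mu_N\para{\bigcap_{i=1}^n A_i}$ with $A_i:=\tub{b_i\le u_n(r)}$, and write $q_n=\prod_{i=1}^n\mu_N\para{A_i\mid A_1\cap\cdots\cap A_{i-1}}$. At each step I split the conditioning event into a remote part $E=A_1\cap\cdots\cap A_{i-1-l_n}$ and a recent window $F=A_{i-l_n}\cap\cdots\cap A_{i-1}$. By $\psi$-mixing with gap $l_n$, the event $A_i^c$ decouples from $E$ up to relative error $O(\psi(l_n))$; and since $F$ records no exceedance in the last $l_n$ steps, the relevant cross term is controlled by the uniform pairwise bound $\mu_N\tub{b_i>v,\,b_{i'}>v}\le Cp^2$, valid for all $i\ne i'$ (by $\psi$-mixing, using $\psi(1)<\infty$; equivalently by bounded distortion of $T_{\textup{N}}$), which gives $\mu_N\para{A_i^c\cap F^c\mid E}=O(l_np^2)$ and $\mu_N\para{F\mid E}=1-O(l_np)$. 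Combining these yields the one-step estimate $\mu_N\para{A_i^c\mid A_1\cap\cdots\cap A_{i-1}}=p\para{1+O\para{\psi(l_n)+l_np}}$, and summing $\log\mu_N\para{A_i\mid\cdots}$ over $i$ produces $\log q_n=-np+O\para{\tfrac{\tau^2}{n}+\tau\,\psi(l_n)+\tfrac{l_n\tau^2}{n}}$, so that $q_n=e^{-np}+O\para{\tfrac{l_n}{n\min(r,r^2)}+\tau\,\psi(l_n)}$. With $l_n$ chosen as in the statement --- balancing $\psi(l_n)=O(\theta^{l_n})$ against $l_n/n$ --- the mixing term has the required order, and $q_n=e^{-1/r}+O\para{\tfrac{1}{\min(r,r^2)}\tfrac{l_n}{n}}$. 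The cases $j\ge1$ are then handled by adapting the same argument to the sets $\tub{S_n=j}$: decompose over the positions of the $j$ exceedances, treat configurations in which two exceedances lie within $l_n$ of each other via the pairwise bound (their total mass being $O(l_n(np)^2)$), and match the remaining terms; alternatively one compares factorial moments and truncates the inclusion--exclusion series for $\mu_N\tub{S_n=j}$ at $m\asymp\log n$.

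The crux is the one-step estimate $\mu_N\para{A_i^c\mid A_1\cap\cdots\cap A_{i-1}}=p\para{1+O\para{\psi(l_n)+l_np}}$: one must show that conditioning on a long run of non-exceedances biases the next step by only $O(l_np)$ in relative terms, and this genuinely needs the uniform pairwise joint-exceedance bound rather than a crude union bound, which would produce a per-step error of relative size $O(l_n)$ and destroy any rate. Pushing this through while keeping the $r$-dependence polynomial, so that the prefactor is $\min(r,r^2)^{-1}$ and not, say, $e^{1/r}$, is the delicate point. The combinatorics for $j\ge1$ --- in particular the bound $O(l_n(np)^2)$ on the mass of clustered configurations --- and the passage from the Poisson estimates to the laws for $M_n^{(k)}$ and $M_n$ are routine by comparison.
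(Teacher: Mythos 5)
Your route is genuinely different from the paper's. The paper does not reprove anything from scratch: its proof of this theorem is a one-paragraph reduction to Theorem 1.1 of \cite{GhoshKirsebomRoy}, which obtains the rate for RCF digits via the Chen--Stein method, i.e.\ a total variation bound between the law of the exceedance count and the Poisson distribution; the only adjustments are to replace the Gauss-measure tail computation by the NICF cusp estimate \eqref{cuspestiNICF} (and the expansion of $\log(1+x)$ behind it) and to use Rieger's mixing rate $\theta=\frac34$. (Incidentally, the equation references in the statement are a slip: the theorem concerns the NICF limits of Theorem \ref{thm:PoissonNICF} and Corollary \ref{cor:NICF}, exactly as your reading assumes, and the displayed defining relation for $l_n$ is also garbled --- what is meant is the balancing $\theta^{l_n}\asymp l_n/n$ that you use.) You instead run a direct quantitative blocking argument: conditional factorization of $\mu_N\{S_n=0\}$, $\psi$-mixing decoupling across a gap $l_n$, and a pairwise joint-exceedance bound to control the recent window. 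For $j=0$ this is sound: the one-step estimate $p\,(1+O(\psi(l_n)+l_np))$ does follow as you indicate (the adjacent-index pair bound via $\psi(1)<\infty$, and the first $l_n$ indices contributing only $O(l_np)$), your treatment of the floor and the $+\frac12$ matches the paper's own computation, and balancing $\theta^{l_n}$ against $l_n/n$ yields the stated order with the $\min(r,r^2)^{-1}$ prefactor.

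The soft spot is $j\ge 1$ (needed for the Poisson rate with $j\ge1$ and for $M_n^{(k)}$ with $k\ge2$), which you declare routine. In a direct proof this is where most of the work sits: for separated exceedance configurations one must decouple $2j+1$ alternating blocks, discard the $l_n$ coordinates around each exceedance at a per-configuration cost that has to be tracked, and then resum over roughly $\binom{n}{j}$ configurations while keeping the error uniform in $r$ (and explicit in $j$ if the constant's $k$-dependence is to be controlled); the factorial-moment alternative truncated at $m\asymp\log n$ likewise needs control of the comparison constants as $m$ grows. None of this is an obstruction --- it is the classical Galambos/Iosifescu blocking argument made quantitative --- but it is precisely the bookkeeping the Chen--Stein route of \cite{GhoshKirsebomRoy} is designed to bypass, since one total variation estimate covers all $j$ and all $k$ at once. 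So: a workable and genuinely different strategy, provided the $j\ge1$ clustering and matching step is actually written out rather than asserted, whereas the paper's proof is complete (modulo the cited reference) as it stands.
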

\begin{proof}
	The proof follows exactly the proof of Theorem 1.1 of \cite{GhoshKirsebomRoy}. Only in a few places, small adjustments have to be made due to the different density of the invariant measure. The adjustments are done using \eqref{cuspestiNICF} and the derivations leading up to it, in particular the power series expansion of $\log(1+x)$. Due to the simplicity of the adaptations we leave the details to the reader.  
\end{proof}

\section{Complex continued fractions and main results}

While there are many CF algorithms for real numbers, the RCF algorithm has a strong sense of being the most \enquote{natural}. This is due to its simplicity, strong properties and its many connections to other fields like analytic number theory, dynamical systems and hyperbolic geometry. In the complex realm no CF algorithm reigns supreme in the same way. Part of the explanation is that the RCF algorithm for real numbers does not generalize in a meaningful way to the complex numbers\footnote{The argument that follows was kindly presented to me by Gerardo Gonz\'alez Robert.}. A naive approach to generalizing the RCF would see us using complex inversion on the square
\begin{equation*}
S:=\tub{z=x+iy\,:\,0<x,y\leq 1}
\end{equation*}
and applying the floor function in both dimensions. More precisely, define the transformation $T:S\to S$ by
\begin{equation*}
T(z)=\frac{1}{z}-\flo{\frac{1}{z}}
\end{equation*}
where $\flo{\cdot}$ denotes the complex floor function given by $\flo{z}=\flo{x+iy}=\flo{x}+i\flo{y}$. Set $a_1(z):=\flo{\frac{1}{z}}$ and $a_{n+1}(z)=a_1(T^n(z))$. Using these definitions, uncountably many different elements of $S$ would be assigned the same sequence of digits $\tub{a_n}$. A specific example is the region bounded by the circles $C_1:=\tub{z\in\C:\num{z-\frac12}=\frac12}$ and $C_2:=\tub{z\in\C:\num{z-(\frac12+i)}=\frac12}$ as well as the line segment connecting $1$ and $1+i$. This region gets mapped to itself under $T$ and in the process every element $z$ in this region gets assigned $a_n(z)=-i$ for all $n\in \N$. Clearly this does not lead to a useful continued fractions representation of numbers in $S$.

Many alternative approaches to complex continued fractions exist, see \cite{LukyanenkoVandehey} for an overview of some of them and their properties and references. The approach which we study in this paper is a generalization of the NICF algorithm for real numbers.

\subsection{Hurwitz complex continued fractions}

Denote by $\Z[i]=\tub{x+iy:x,y\in\Z}$ the Gaussian integers and denote by $[z]_i$ the Gaussian integer nearest to $z$. We apply the convention that ties are broken by rounding down in both real and complex part, for example $[z]_i = a + bi$ if $z=(a+0.5)+ (b+ 0.5)i$ for $a,b \in \Z$. This ensures that $[\cdot]_i$ is well-defined, however, the choice of rounding will play no role for our results since the convention relates only to a set of measure zero. 

Let $B=\tub{z=x+iy\in\C : -\frac12\leq x,y<\frac12}$. Analogous to the Gauss map, we define the Hurwitz map $T:B\to B$ by
\begin{align}
Tz=\frac{1}{z}-\sqpar{\frac{1}{z}}_i
\end{align}
For a given $z \in \C$, consider the sequence $z_k$ given by $z_0=z-[z]_i$ and 
\begin{align}
z_k=Tz_{k-1}=\frac{1}{z_{k-1}}-\sqpar{\frac{1}{z_{k-1}}}_i, \;\; \mathrm{i.e.} \;\;\; z_{k-1}=\frac{1}{\sqpar{\frac{1}{z_{k-1}}}_i+z_k},
\end{align}
for $k\geq 1$.
By setting $a_0=a_0(z):=[z]_i$ and $a_k=a_k(z):=\sqpar{\frac{1}{T^{k-1}z_0}}_i$ for $k\geq 1$, we get the Hurwitz complex continued fractions (HCCF) expansion of $z$ written as
\begin{align}
z=a_0+\cfrac{1}{a_1+\cfrac{1}{a_2+\cfrac{1}{\ddots}}}:=[a_0;a_1,a_2,\dots].
\end{align}
It is well known that this expansion converges and provides a meaningful representation of complex numbers, see \cite{Hensley} for an introduction to the HCCF expansion. 
Let $\tyk{B}$ denote the $\sigma$-algebra of Borel subsets of $B$ and let $\lambda$ be the Lebesgue measure on $B$. It is known that there exists a unique measure on $B$ which is $T$-invariant and absolutely continuous with respect to Lebesgue (see \cite{Hensley}, Section 5.7). We denote this measure by $\mu$. Thus we have a dynamical system $(T, B, \mu)$ where $T:B\to B$ is a $\mu$-preserving map. Consider the sequence of real-valued random variables $\num{a_i}:B\to\R_+$. 
As previously indicated we are particularly interested in the occurrence of large values of $\num{a_i(z)}$. 

We are finally ready to state the main results of this article. The first is a Poisson law for HCCF, analogue to Iosifescus result applied to RCF's.
\begin{thm}\label{thm:main1}
	For $z\in B$ and $v\in\R_+$, let
	\begin{equation*}
	S_n(z,v):=\#\tub{i\in\tub{1,\dots,n}:\num{a_i(z)}>v}.
	\end{equation*}
	There exist $C>0$ such that for all $r>0$ and any $j\in\N$ 
	\begin{equation}\label{ComplexPoisson}
	\lim_{n\to\infty} \mu \tub{z\in B:S_n(z,Cr\sqrt{n})=j}= e^{-\para{\frac{1}{r^2}}} \frac{r^{-2j}}{j!}.
	\end{equation}
\end{thm}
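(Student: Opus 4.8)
The plan is to reduce Theorem~\ref{thm:main1} to Iosifescu's Poisson law (Theorem~\ref{IosifescuPoisson}) applied to the stationary sequence $\num{a_i}$ under the measure $\mu$. Two ingredients are needed: first, that the sequence $\num{a_i}$ is $\psi$-mixing with respect to $\mu$, and second, that the tail condition \eqref{DIG1} holds for a suitable normalizing sequence $u_n(r)$. The $\psi$-mixing property should follow from the known mixing properties of the Hurwitz map $(T,B,\mu)$; since $\mu$ is the unique absolutely continuous invariant measure and the system enjoys a Gauss--Kuzmin--Lévy type theorem (see \cite{Hensley}, Section 5.7, or the work cited there on the transfer operator), the digits $a_i$ generate the relevant filtration and the exponential mixing of $T$ transfers to a $\psi$-mixing statement for the digit sequence, exactly as in the RCF and NICF cases. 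I would state this as a lemma and either cite the literature on the Hurwitz Gauss--Kuzmin problem or sketch the transfer-operator argument.

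The second ingredient is the cusp estimate: I need to show that there is a constant $C>0$ with
\begin{equation*}
\lim_{n\to\infty} n\,\mu\tub{z\in B:\num{a_1(z)}>Cr\sqrt{n}}=\frac{1}{r^2}.
\end{equation*}
The event $\tub{\num{a_1(z)}>R}$ is, up to the rounding convention (a null set), the event $\tub{\num{1/z}>R}$, which corresponds to $z$ lying in a disc of radius $\approx 1/R$ around the origin. Hence $\mu\tub{\num{a_1}>R}$ behaves like the $\mu$-measure of a small disc about $0$. The key input is the behaviour of the density of $\mu$ near $0$; if the density of $\mu$ is bounded above and below by positive constants near the origin (which is the generic expectation for these systems, and is the kind of cusp estimate the abstract mentions), then $\mu\tub{\num{a_1}>R}\sim c/R^2$ for some constant $c>0$, and setting $C=\sqrt{c}$ (or rather $C$ chosen so that $c/C^2=1$ in the rescaled variable) gives the required limit with $\tau(r)=1/r^2$. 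I would carry this out by writing $\mu$ of the disc as an integral of the density, changing to polar coordinates, and extracting the leading-order term exactly as in the NICF computation preceding \eqref{cuspestiNICF}.

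With both ingredients in hand, the conclusion is immediate: Theorem~\ref{IosifescuPoisson} applied with $u_n(r)=Cr\sqrt{n}$ and $\tau(r)=1/r^2$ yields
\begin{equation*}
\lim_{n\to\infty}\mu\tub{z\in B:S_n(z,Cr\sqrt n)=j}=e^{-1/r^2}\frac{r^{-2j}}{j!},
\end{equation*}
which is exactly \eqref{ComplexPoisson}. The extension from $\mu$ to Lebesgue-absolutely-continuous measures, if desired, follows the same route as in Galambos and Iosifescu.

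The main obstacle is the cusp estimate for the density of $\mu$ near the origin. Unlike the RCF and NICF cases, where the invariant density is known explicitly (the Gauss density, resp. Rieger's density), the Hurwitz invariant density $\mu$ is not known in closed form, and this is precisely why the abstract flags that ``information is still limited.'' So the hard part is establishing sharp enough two-sided bounds on the density of $\mu$ in a neighbourhood of $0$ to pin down the constant $C$ and the exponent, rather than merely $\mu\tub{\num{a_1}>R}\asymp 1/R^2$. I would rely on whatever is available in \cite{Hensley} about the Hurwitz density (boundedness, continuity, or explicit estimates near the cusp) and, if only upper and lower bounds of the right order are available, either state the theorem with an unspecified constant $C$ (as the statement does) or impose the cusp estimate as a hypothesis. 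The $\psi$-mixing step, by contrast, should be comparatively routine given the existing literature on Gauss--Kuzmin for complex continued fractions.
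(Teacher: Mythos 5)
Your overall skeleton matches the paper's: reduce to Iosifescu's Poisson law via the known $\psi$-mixing of the Hurwitz system (this is indeed a citation, due to Nakada with the underlying Kuzmin-type estimates repaired by Schweiger) plus a tail estimate for $\mu\tub{\num{a_1}>R}$. But the crucial step is exactly where your argument has a genuine gap. You assert that if the density of $\mu$ is bounded above and below by positive constants near the origin, then $\mu\tub{\num{a_1}>R}\sim c/R^2$ for some constant $c>0$. That implication is false: two-sided bounds only give $\mu\tub{\num{a_1}>R}\asymp 1/R^2$, and condition \eqref{DIG1} requires an actual limit, i.e.\ an asymptotic of the form $H/R^2+o(1/R^2)$ with a genuine constant $H$. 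Your proposed fallbacks do not repair this: leaving $C$ unspecified still requires the limit in \eqref{ComplexPoisson} to exist, and assuming the cusp estimate as a hypothesis would not prove the theorem as stated. The paper's real work (Lemma \ref{lem:integral}) is precisely to establish $\mu\tub{\num{a_1}>j}=H/j^2+O(1/j^3)$. This uses Hensley's structure theorem (the density $\rho$ is real analytic on the 12 regions cut out by the unit circles and has one-sided limits along the arcs) together with the recent representation $\rho(z)=\int_{V_i}\num{zw+1}^{-4}\,dw$ from Hiary--Vandehey and the measurability and positivity of the sets $V_i$ from Ei--Ito--Nakada--Natsui, which give that the limits $C_i=\lim_{A_i\ni z\to 0}\rho(z)$ exist and are strictly positive. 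Note in particular that $\rho$ need not be continuous at $0$: the limits from $A_1,\dots,A_4$ and from $A_5,\dots,A_8$ may differ, so your plan to ``change to polar coordinates and extract the leading term exactly as in the NICF computation'' does not go through directly; one must decompose the small disc into the regions $A_i$ and show the thin regions $A_5,\dots,A_8$ near $0$ have area $O(1/j^3)$, whence only $\tilde C$ survives and $H=\pi\tilde C$.

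A secondary inaccuracy: the event $\tub{\num{a_1}>R}=\tub{\num{[1/z]_i}>R}$ differs from $\tub{\num{1/z}>R}$ by much more than a null set (the rounding convention is a null set issue, but the rounding itself is not); the paper controls this with the sandwich $\tub{\num{w}\geq R+\tfrac{1}{\sqrt 2}}\subset\tub{\num{[w]_i}\geq R}\subset\tub{\num{w}\geq R-\tfrac{1}{\sqrt 2}}$, which, after inversion and using Nakada's two-sided comparison of $\mu$ with Lebesgue, contributes only to the $O(1/R^3)$ error term. This is easy to fix, but it should be made quantitative rather than dismissed as a measure-zero discrepancy.
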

This leads to the immediate corollaries
\begin{cor}
	\label{cor:main1}
	Let $M_n(z):=\max_{1\leq i \leq n} \num{a_i(z)}$ and let $M_n^{(k)}(z)$ denote the $k$'th largest element among $\tub{\num{a_1(z)},\dots, \num{a_n(z)}}$. There exists $C>0$ such that for all $r>0$ 
	\begin{equation}\label{Mklimit}
	\lim_{n\to\infty} \mu \tub{z \in B: M_n^{(k)}\leq Cr\sqrt{n}}=e^{-\frac{1}{r^2}}\sum_{j=0}^{k-1} \frac{r^{-2j}}{j!}.
	\end{equation}
	
	In particular,
	\begin{equation}\label{eq:ComplexGalambos}
	\lim_{n\to\infty} \mu \tub{z \in B: M_n(z)\leq Cr\sqrt{n}}=e^{-\frac{1}{r^2}}.
	\end{equation}
\end{cor}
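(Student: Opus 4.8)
The plan is to obtain the corollary as a purely formal consequence of Theorem~\ref{thm:main1}, via the standard passage from exceedance counts to order statistics. The starting point is the exact event identity
\begin{equation*}
\tub{z\in B:M_n^{(k)}(z)\leq v}=\tub{z\in B:S_n(z,v)\leq k-1},
\end{equation*}
valid for every threshold $v>0$ and every $n\geq k$: the $k$-th largest of $\num{a_1(z)},\dots,\num{a_n(z)}$ is $\leq v$ if and only if strictly fewer than $k$ of these values exceed $v$, which is precisely the event counted by $S_n(z,v)$. No care with ties is needed, since $S_n$ counts strict exceedances.

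From this identity I would write, for the threshold $v=Cr\sqrt{n}$ with $C$ the constant supplied by Theorem~\ref{thm:main1},
\begin{equation*}
\mu\tub{z\in B:M_n^{(k)}(z)\leq Cr\sqrt{n}}=\sum_{j=0}^{k-1}\mu\tub{z\in B:S_n(z,Cr\sqrt{n})=j}.
\end{equation*}
This is a finite sum of $k$ terms, so I may apply Theorem~\ref{thm:main1} to each summand separately and pass the limit through the sum: for each fixed $j\in\tub{0,\dots,k-1}$ the $j$-th term converges to $e^{-1/r^2}r^{-2j}/j!$, whence
\begin{equation*}
\lim_{n\to\infty}\mu\tub{z\in B:M_n^{(k)}(z)\leq Cr\sqrt{n}}=e^{-1/r^2}\sum_{j=0}^{k-1}\frac{r^{-2j}}{j!},
\end{equation*}
which is \eqref{Mklimit}. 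Specializing to $k=1$ collapses the sum to the single term $j=0$ and recovers the Fr\'echet-type law \eqref{eq:ComplexGalambos} for the maximum $M_n$.

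Since every step is elementary, there is no real obstacle in the corollary itself; in particular it inherits the constant $C$ from Theorem~\ref{thm:main1} rather than producing a new one, and all the genuine difficulty --- the cusp estimates for $\mu$ and the $\psi$-mixing of the HCCF digit sequence that underlie the Poisson limit --- has already been absorbed into the proof of that theorem.
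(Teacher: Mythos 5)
Your proof is correct and matches the paper's (very brief) argument: the paper simply notes that the corollary follows from Theorem~\ref{thm:main1}, implicitly via the same identity $\tub{M_n^{(k)}\leq v}=\tub{S_n(z,v)\leq k-1}$ and termwise passage to the limit in the finite sum, with \eqref{eq:ComplexGalambos} being the case $j=0$ (equivalently $k=1$). You have merely written out explicitly the standard step the paper leaves to the reader, so there is nothing to correct.
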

Note that \eqref{eq:ComplexGalambos} provides an analogue of Galambos's extreme value law for HCCF's. The corollary is obtained simply by setting $j=0$ in Theorem \ref{thm:main1}.

\section{The invariant measure with respect to the Hurwitz map}

The strategy of proof of Theorem \ref{thm:main1} is clear. If we can satisfy the conditions of Theorem \ref{IosifescuPoisson} the result follows. 
Fortunately, the $\psi$-mixing property of $T$ is known and it is even known to be mixing at an exponetial rate, i.e. $\psi(n)=O(\theta^n)$ for some $\theta<1$. This exponential $\psi$-mixing result was first stated by Nakada in \cite[Corollary 2]{Nakada}, but the proof relied on work by Schweiger \cite{Schweiger5} which turned out to contain a serious gap. As explained in \cite{EiItoNakadaNatsui}, Schweiger later was able to recover the result with a sub-exponential rate in \cite{Schweiger3} and finally the exponential rate in \cite{Schweiger4}, meaning that Nakada's result is indeed correct. 
Note also that the sequence $\num{a_i}$ forms a stationary sequence, this is an easy consequence of $\mu$ being $T$-invariant. 

The greater challenge is to satisfy \eqref{DIG1} in Theorem \ref{IosifescuPoisson}. For this we need more specific information about the unique $T$-invariant measure absolutely continuous with respect to Lebesgue. We first cite the following theorem by Hensley which is central to our proof. 
%
\begin{lemma}[\cite{Hensley}, Theorem 5.5]\label{lem:1}
	The density function $\rho$ of the measure $\mu$   is continuous except possibly along the intersections of the circles $|z\pm 1|=1, |z\pm i|=1$ and $|z\pm 1 \pm i|=1$ with $B$. It is real analytic on each of the 12 open regions into which the interior of $B$ is dissected by these circles. 
	Moreover, the measure $\mu$ is symmetric under complex conjugation and multiplication by $i$.
\end{lemma}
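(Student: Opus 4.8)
The plan is to realise $\rho$ as the fixed point of the transfer (Perron--Frobenius) operator $\mathcal L$ of $T$ and to read its regularity off the holomorphic branch structure of $T$. For an admissible digit $a\in\Z[i]$ let $\psi_a(z)=\frac{1}{z+a}$ be the corresponding inverse branch of $T$; each $\psi_a$ is holomorphic, and as a map of $\R^{2}$ it has Jacobian $|z+a|^{-4}$. Then
\begin{equation*}
\mathcal L f(z)=\sum_{a}\frac{1}{|z+a|^{4}}\,f\!\para{\frac{1}{z+a}},
\end{equation*}
the sum running over the $a$ \emph{admissible at $z$}, i.e.\ those with $\psi_a(z)\in B$, and the $T$-invariance of $\mu$ is the fixed-point equation $\rho=\mathcal L\rho$. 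Since $\sum_{a}|z+a|^{-4}<\infty$ uniformly for $z\in B$, the series converges boundedly once $\rho$ is known to be bounded (above and below), which we take as given from the construction of $\mu$ in \cite{Hensley}.

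The first step is to locate the possible discontinuities. A digit $a$ is admissible at $z$ exactly when $z\in\psi_a^{-1}(B)=\tub{\frac1w-a:w\in B}$, whose boundary is the image of the four edges of $\partial B$ under $w\mapsto\frac1w-a$. Inversion carries a line not through the origin to a circle through the origin, so each boundary piece is a circular arc; working out which of these arcs actually meet $B$ — a finite check, only the sixteen digits $\pm1\pm i$, $\pm2$, $\pm2i$, $\pm2\pm i$, $\pm1\pm2i$ contributing — one finds that their union inside $B$ is precisely the family of arcs cut out of $B$ by the circles $|z\pm1|=1$, $|z\pm i|=1$ and $|z\pm1\pm i|=1$, and that these dissect the interior of $B$ into $12$ open cells. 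On the interior of a cell the list of admissible $a$ is constant and each point $\frac1{z+a}$ stays off $\partial B$; hence on each cell the right-hand side above is a locally uniform sum of maps of the form $(\text{holomorphic})\circ\rho$, so $\mathcal L\rho$ is as regular on a cell as $\rho$ is, and a jump of $\rho$ can occur only across one of the arcs, where a term enters or leaves the sum.

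To turn ``as regular as $\rho$'' into an unconditional statement one runs the standard bootstrap for the conformal iterated function system $\tub{\psi_a}$. On the Banach space of bounded functions that, on each of the $12$ cells, extend holomorphically to a fixed complex neighbourhood (with a sup-type norm), the operator $f\mapsto\sum_a|z+a|^{-4}f(\psi_a(z))$ is well defined and is a contraction — this is the Ruelle--Perron--Frobenius estimate, resting on the uniform contraction and bounded distortion of the $\psi_a$ — so it has a unique fixed point there. Since the bounded density $\rho$ is a fixed point, $\rho$ is real analytic on each of the $12$ cells and extends continuously to the closure of each, which is the first part of the lemma. The symmetry statement then follows from uniqueness of the absolutely continuous invariant measure: complex conjugation $c(z)=\bar z$ and multiplication $m(z)=iz$ preserve $\Z[i]$, the square $B$ and the rounding $\sqpar{\cdot}_i$ up to a Lebesgue-null set, and $T\circ c=c\circ T$ while $T\circ m=m^{-1}\circ T$; consequently the average of $\mu$ over the order-$8$ dihedral group generated by $c$ and $m$ is again $T$-invariant (for each $g$ in the group, $T\circ g=\sigma(g)\circ T$ with $g\mapsto\sigma(g)$ a permutation of the group) and absolutely continuous, hence equals $\mu$, so $\mu$ and $\rho$ are invariant under conjugation and under multiplication by $i$.

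The main obstacle is the geometric bookkeeping of the first step: one must verify that no circles beyond those listed arise from the edges of $B$ and that the arcs carve the interior of $B$ into exactly $12$ cells — a finite but delicate plane-geometry computation — and, on the analytic side, that the transfer operator contracts on a space of \emph{piecewise} holomorphic functions adapted to this $12$-cell partition, which is where the branch estimates and the a priori two-sided bounds on $\rho$ from the existence theorem are needed.
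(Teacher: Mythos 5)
Your proposal addresses a statement that the paper itself does not prove: Lemma \ref{lem:1} is quoted from Hensley's book (Theorem 5.5), and the closest the paper comes to an argument is the representation \eqref{eq:DensityForm}, $\rho(z)=\int_{V_i}|zw+1|^{-4}\,dw$ on $A_i$, coming from the natural-extension constructions of Ei--Ito--Nakada--Natsui and Hiary--Vandehey, from which piecewise real analyticity and the symmetries are immediate. So you are supplying a proof sketch of a cited result, via the transfer operator, which is a legitimate alternative in outline --- but as written it has a genuine gap at its central step. For the bootstrap you need $\mathcal{L}$ to map your space of functions, holomorphic on a fixed complexification of each of the $12$ cells, into itself. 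For a single term $f(\psi_a(z))$ to be analytic on a cell $A_i$ it is not enough that the list of admissible digits is constant on $A_i$ and that $\psi_a(z)$ avoids $\partial B$ (which is all you check): you need $\psi_a(A_i)$ to lie inside a single cell, i.e.\ not to straddle any of the eight arcs across which members of your space are allowed to jump. This is not automatic; cylinders genuinely straddle some of these arcs (for instance the cylinder of the digit $2-2i$ crosses the arc of $|z-1-i|=1$ near the point $\bigl(1-\tfrac{1}{\sqrt{2}}\bigr)(1+i)$). What saves the argument is the nontrivial Markov-type fact that $T$ maps the eight arcs into the union of the eight arcs and $\partial B$ (the arcs on $|z\pm1|=1$, $|z\pm i|=1$ are sent into $\partial B$, the corner arcs onto other arcs of the family), so that $\psi_a(A_i)$, being connected and disjoint from the arcs, sits in one cell. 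This compatibility is precisely the delicate geometric input behind the finite-range structure --- the same structure whose mishandling produced the historical gap in Schweiger's work that the paper recounts --- and your sketch neither states nor verifies it. Relatedly, a linear operator with the nonzero fixed point $\rho$ cannot literally be a contraction on a Banach space; you need the spectral-gap or cone-contraction form of Ruelle--Perron--Frobenius (or uniqueness of the fixed point under the normalisation $\int f\,d\lambda=1$), so that sentence needs restating even once the function space is fixed.

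The symmetry argument also does not quite close as written: averaging over the dihedral group only yields that $\tfrac18\sum_g g_*\mu=\mu$, not that each individual pushforward equals $\mu$. The repair is short: $z\mapsto\bar z$ and $z\mapsto -z$ commute with $T$ up to null sets, so their pushforwards of $\mu$ are $T$-invariant and hence equal $\mu$ by uniqueness; for $m(z)=iz$ the relation $T\circ m=m^{-1}\circ T$ gives $T_*(m_*\mu)=(m^{-1})_*\mu$ and $T_*((m^{-1})_*\mu)=m_*\mu$, so $\tfrac12\bigl(m_*\mu+(m^{-1})_*\mu\bigr)=\mu$, and since $(m^{-1})_*\mu=m_*\bigl((-\mathrm{id})_*\mu\bigr)=m_*\mu$ one concludes $m_*\mu=\mu$. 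But the inference you actually wrote down does not deliver the stated conclusion.
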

The 8 arcs and 12 regions in $B$ are denoted by $\gamma_i$ and $A_i$ respectively. See Figure \ref{Fig1}.
\begin{figure}[h]
	\centering
	\includegraphics[width=7.5cm]{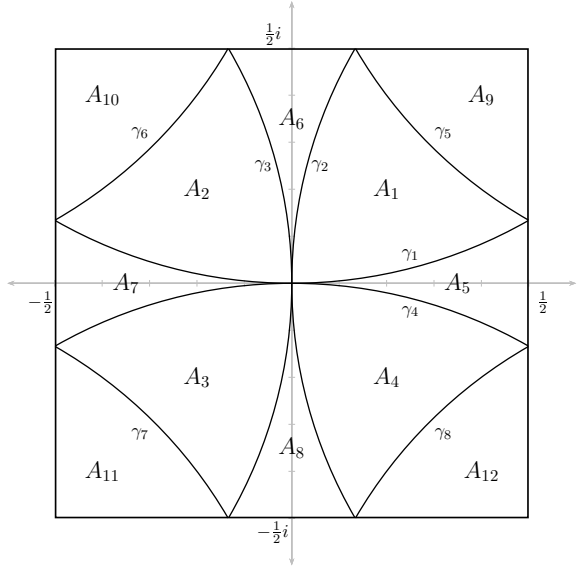}
	\caption{Dissection of B.}
	\label{Fig1}
\end{figure}
An important further fact about the density $\rho$ is that the limits 
\begin{equation*}
C_i : = \lim_{A_i\ni z \to 0} \rho(z),\quad\text{for}\quad i=1,\dots,8
\end{equation*}
exist and are strictly positive, i.e $C_i>0$. This can be seen with the help of various facts from recent works of Hiary and Vandehey \cite{HiaryVandehey} as well as Ei, et al \cite{EiItoNakadaNatsui}. As explained in \cite{HiaryVandehey}, the density $\rho$ may be expressed as
\begin{equation}
\label{eq:DensityForm}
\rho(z)=\int_{V_i} \frac{1}{\num{zw+1}^4}\,dw,\quad\quad\text{if } z\in A_i 
\end{equation}
where $V_i$ is a certain set related to $A_i$ whose exact definition we will not get into here. The sets $V_i$ are not completely understood at this point (if they were, we would understand the measure $\mu$ much better), but they were thoroughly studied in \cite{EiItoNakadaNatsui} where it was shown, among other things, that they are measurable and of positive Lebesgue measure. Hence we see that from \eqref{eq:DensityForm} that the limit for $z\to 0$ inside either of the regions $A_i$, $i=1,\dots,8$, exists and equals the Lebesgue measure of the corresponding set $V_i$. Due to the symmetries mentioned in Theorem \ref{lem:1} the limits inside $A_1$, $A_2$, $A_3$ and $A_4$ must be identical and the same applies to the limits inside $A_5$, $A_6$, $A_7$ and $A_9$. Hence we may set:
\begin{gather}
\begin{split}
\label{ConstantNotation}
\tilde{C}:=C_i,\quad \text{for } i=1,2,3,4.\\
C':=C_i,\quad \text{for } i=5,6,7,8.
\end{split}
\end{gather}
Nakada had much earlier showed \cite[Theorem 2]{Nakada} that the measure $\mu$ is equivalent to the Lebesgue measure, i.e. there exists a constant $Q>0$ such that for all $E\in\tyk{B}$ 
\begin{equation*} 
\frac{1}{Q}\lambda(E)\leq \mu (E) \leq Q\lambda(E).
\end{equation*}

\subsection*{Proof of main results}

The main technical lemma of this paper is the following.
\begin{lemma}\label{lem:integral}
	For some constant $H>0$ we have
	\begin{equation*}
	\mu\tub{z\in B:|a_1(z)| > j} =\frac{H}{j^2}+O\para{\frac{1}{j^3}},\quad \text{ as } j\to\infty.
	\end{equation*}
\end{lemma}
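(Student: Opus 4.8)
The plan is to relate the measure of the event $\{|a_1(z)|>j\}$ to the Lebesgue measure of a corresponding region near the origin, then compute that Lebesgue measure asymptotically, and finally integrate the density $\rho$ against it using the cusp estimates from Lemma~\ref{lem:1} and the constants $\tilde C, C'$ in \eqref{ConstantNotation}. First I would observe that $|a_1(z)|>j$ exactly when $|[1/z]_i|>j$, which (up to the boundary ties, a null set) is essentially the condition that $1/z$ lies outside a disc of radius roughly $j$ about the origin, i.e. $z$ lies in a small neighbourhood $D_j$ of $0$. More precisely $\{z : |[1/z]_i| > j\}$ is the preimage under $w \mapsto 1/w$ of $\{w : |[w]_i| > j\}$, and the latter set differs from $\{|w| > j + O(1)\}$ only in a bounded annular region; inverting, $D_j = \{z : |[1/z]_i|>j\}$ is sandwiched between discs $\{|z| < 1/(j+c_1)\}$ and $\{|z| < 1/(j-c_2)\}$ for absolute constants, and in any case $\lambda(D_j) = \pi/j^2 + O(1/j^3)$ by a direct area computation (the error coming from the finitely many lattice squares straddling the circle $|w|=j$, which contribute $O(j)$ squares each of area $O(1)$ at distance $\asymp j$ from the origin, hence area $O(1/j^3)$ after inversion — this is exactly the "small calculation" alluded to in the NICF case).

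Next, since $D_j$ shrinks to $0$, for large $j$ it is contained in the interior of $B$ and, crucially, in the union of the eight regions $A_1,\dots,A_8$ that have $0$ on their boundary (the circles $|z\pm1|=1$ etc.\ all pass through $0$, so near $0$ the dissection of $B$ into the $A_i$ is just the dissection of a small disc into eight sectors by the tangent lines to those circles at $0$). On each such sector $A_i \cap D_j$, the density $\rho$ extends continuously to $0$ with value $C_i \in \{\tilde C, C'\}$ and is real-analytic, so $\rho(z) = C_i + O(|z|)$ there. Therefore
\begin{equation*}
\mu(D_j) = \sum_{i=1}^{8} \int_{A_i \cap D_j} \rho(z)\, d\lambda(z)
= \sum_{i=1}^{8} \left( C_i \, \lambda(A_i \cap D_j) + O\!\left( \int_{A_i\cap D_j} |z|\, d\lambda \right) \right).
\end{equation*}
By the symmetry under conjugation and multiplication by $i$ from Lemma~\ref{lem:1}, the eight sectors $A_i \cap D_j$ each carry a quarter... more precisely the four "$\tilde C$" sectors together and the four "$C'$" sectors together tile $D_j$ up to null sets, so $\sum_i C_i \lambda(A_i \cap D_j) = \frac{\tilde C + C'}{2}\,\lambda(D_j) + (\text{lower order from boundary mismatch})$, and in fact by symmetry the four $\tilde C$-sectors have equal area and likewise the four $C'$-sectors, giving $\sum_i C_i\lambda(A_i\cap D_j) = 2(\tilde C + C')\cdot\frac{\lambda(D_j)}{4} = \frac{\tilde C + C'}{2}\lambda(D_j)$ when $D_j$ is a genuine disc. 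The error term $\int_{A_i \cap D_j}|z|\,d\lambda$ is $O(\int_0^{1/j} r \cdot r\, dr) = O(1/j^3)$. Combining with $\lambda(D_j) = \pi/j^2 + O(1/j^3)$ yields
\begin{equation*}
\mu\{z\in B : |a_1(z)|>j\} = \frac{\pi(\tilde C + C')}{2}\cdot\frac{1}{j^2} + O\!\left(\frac{1}{j^3}\right),
\end{equation*}
so the lemma holds with $H = \tfrac{\pi}{2}(\tilde C + C')$ (a positive constant since $\tilde C, C' > 0$).

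The main obstacle is the bookkeeping around the origin: confirming that $D_j$ really is trapped between two concentric discs of radii $1/j + O(1/j^2)$, and that the dissecting circles near $0$ look like straight sectors so that the limiting values $C_i$ genuinely govern the integral over $A_i\cap D_j$ with only an $O(1/j^3)$ error. Both are elementary but must be done carefully: the first requires controlling how the nearest–Gaussian–integer function behaves on the circle $|w| = j$ (how far a point on that circle can be from the nearest lattice point, and hence how the level set $\{|[w]_i| > j\}$ deviates from a disc), and the second requires knowing that the density's one–sided limits at $0$ along each $A_i$ not only exist (which is \eqref{eq:DensityForm}) but are approached with a controlled rate — here real-analyticity on the open region $A_i$ together with continuity up to the arc through $0$ gives a Lipschitz bound $\rho(z) = C_i + O(|z|)$, which is all that is needed. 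No finer information about the mysterious sets $V_i$ is required, only positivity of their Lebesgue measures, which is already cited.
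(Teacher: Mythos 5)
Your overall strategy coincides with the paper's: invert by $z\mapsto 1/z$, sandwich $\tub{z:\num{[1/z]_i}>j}$ between discs of radii $\frac{1}{j+c}$ and $\frac{1}{j-c}$ (the paper uses $c=\frac{1}{\sqrt 2}$), reduce to $\mu\tub{\num{z}\le \frac1j}$ up to $O(1/j^3)$, and evaluate that measure using the one-sided limits $C_i$ of the density at $0$. However, the geometric claim on which your final computation rests is false. The four circles through the origin, $|z\pm 1|=1$ and $|z\pm i|=1$, are \emph{tangent} to the coordinate axes at $0$ (the first pair to the imaginary axis, the second pair to the real axis), so their tangent lines at $0$ are just the two axes: near $0$ the dissection is \emph{not} into eight sectors of positive opening angle. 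Four of the eight regions meeting $0$ (the ones with limit $C'$, e.g.\ the one bounded by $\gamma_1$) are quadratic cusps: since $x^2+(y-1)^2=1$ meets $|z|=\frac1j$ at height $y=\frac{1}{2j^2}$, the intersection of such a region with the disc of radius $\frac1j$ fits inside a $\frac1j\times\frac{1}{j^2}$ rectangle and has area $O(1/j^3)$, while each of the four remaining regions fills a quarter of the disc up to $O(1/j^3)$. Hence your identity $\sum_i C_i\,\lambda(A_i\cap D_j)=\frac{\tilde C+C'}{2}\lambda(D_j)$ (each region an eighth of the disc) fails, and the constant you extract, $H=\frac{\pi}{2}(\tilde C+C')$, is not the right one: the $C'$-regions contribute only to the error term and the leading constant is $H=\pi\tilde C$. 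The lemma as stated (existence of some $H>0$) is not falsified, but the step as written would not survive scrutiny, and repairing it requires precisely the cusp-area estimate above, which is the crux of the paper's proof.

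A secondary point: your bound $\rho(z)=C_i+O(\num{z})$ does not follow from ``real-analyticity on the open region together with continuity up to the arc''; that combination gives no modulus of continuity at the boundary point. A rate can be extracted from the representation \eqref{eq:DensityForm}, or one can dispense with any rate as the paper does, bounding $\rho$ between $C_i-\epsilon$ and $C_i+\epsilon$ on $D_i(j)$ for $j$ large and letting $\epsilon\to 0$ at the end. The remainder of your argument (the sandwich between discs, the $O(1/j^3)$ correction via equivalence of $\mu$ and $\lambda$, and the use of only the positivity of the limits $C_i$ rather than any finer information about the sets $V_i$) matches the paper.
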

\begin{proof}
	For the purpose of this proof we occasionally shorten notation by writing $\tub{\num{a_1}>j}$ instead of $\tub{z\in B: \num{a_1(z)}>j}$. 
	
	Set
	\begin{equation*}
	A:=\tub{\num{a_1}>j}=\tub{\num{\sqpar{\frac{1}{z}}_i}>j},
	\end{equation*}
	hence our goal is to determine the asymptotics of $\mu(A)$. 
	Let $\phi: \C\backslash\tub{0}\to\C\backslash\tub{0}$ be given by $\phi(z)=\frac1z$ and notice that $\phi$ is a bijection which maps disks of radius $r>0$ to complements of disks of radius $\frac1r$.
	Then for $A'=\tub{z\in\C:\num{\sqpar{z}_i}\geq j}$,
	\begin{equation*}
	A=\phi( A').
	\end{equation*}
	We define two sets $\underline{A'}$ and $\overline{A'}$ by
	\begin{equation*}
	\underline{A'}=\tub{\num{z}\geq j+\frac{1}{\sqrt{2}}}\enskip\text{and}\enskip
	\overline{A'}=\tub{\num{z}\geq j-\frac{1}{\sqrt{2}}}.
	\end{equation*}
	A simple geometric argument (see Figure \ref{Fig2}) shows that
	\begin{equation}
	\label{eq:setinclusion}
	\underline{A'}\subset A'\subset \overline{A'}
	\end{equation}
	Figure \ref{Fig2} shows $A'$, $\underline{A'}$ and $\overline{A'}$ for some unspecified $j>2$ as well as the set $B$. $A'$ is the (unbounded) grey region, $\underline{A'}$ is the region \emph{outside} the green circle while $\overline{A'}$ is the region \emph{outside} the red circle. The blue dots are the Gaussian integers. The set $A$, not drawn in the figure, is some neighborhood of $0$ inside $B$. 
	\begin{figure}[h]
		\centering
		\includegraphics[width=8cm]{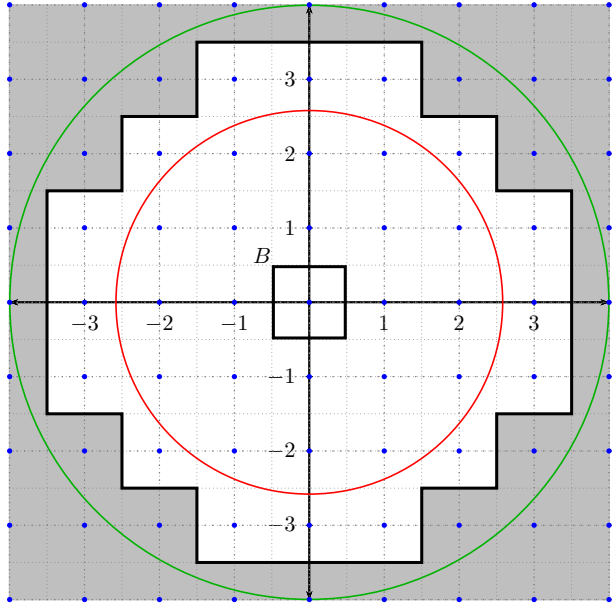}
		\caption{The sets $B$, $A'$, $\underline{A'}$ and $\overline{A'}$}
		\label{Fig2}
	\end{figure}	
	Applying $\phi$ to \eqref{eq:setinclusion} we get,
	\begin{equation*}
	\tub{\num{z}\leq \frac{1}{j+\frac{1}{\sqrt{2}}}}=\phi(\underline{A'})\subset A \subset \phi(\overline{A'})=\tub{\num{z}\leq \frac{1}{j-\frac{1}{\sqrt{2}}}}.
	\end{equation*}
	Taking the $\mu$-measure in the above inclusion followed by a small computation which makes use of the fact that $\mu$ is equivalent with respect to Lebesgue, we deduce that  
	\begin{equation}\label{3rdpower}
	\mu(A)=\mu\tub{\num{z}\leq \frac{1}{j}}+O\para{\frac{1}{j^3}}.
	\end{equation}
	We now proceed to estimate $\mu\tub{\num{z}\leq \frac{1}{j}}$, i.e. the $\mu$-measure of the ball centered at 0 with radius $\frac{1}{j}$. Set $D(j)=\tub{\num{z}\leq \frac{1}{j}}$ and
	\begin{equation*}
	D_i=D_i(j):=D(j)\cap A_i\enskip\text{for } i=1,\dots 8.
	\end{equation*}
	Note that the intersection with $A_9$, $A_{10}$, $A_{11}$, and $A_{12}$ is empty for $j$ sufficiently large, hence we may ignore these regions for our purposes. We have
	\begin{equation}
	\label{eq:1}
	\mu(D(j))=\mu\para{\bigcup_{i=1}^{8} D_i}=\sum_{i=1}^{8}\mu(D_i)=4\mu(D_1)+4\mu(D_5).
	\end{equation}
	The last equality follows from the symmetries noted in Theorem \ref{lem:1}. We can now use the fact that $\rho$ is continuous on each $D_i$ and that the limits
	\begin{equation*}
	\tilde{C}=\lim_{D_1\ni z\to 0}\rho(z)\quad\text{ and }\quad C'=\lim_{D_5\ni z\to 0}\rho(z).
	\end{equation*}
	exist and are strictly positive to estimate $\mu(D_1)$ and $\mu(D_5)$. Let $\epsilon>0$ be given. For $j$ sufficiently large we have  
	\begin{align*}
	\num{\rho(z)-\tilde{C}}&<\epsilon \enskip\text{if }z\in D_1(j),\\
	\num{\rho(z)-C'}&<\epsilon \enskip\text{if }z\in D_5(j)
	\end{align*}
	hence
	\begin{align*}
	(\tilde{C}-\epsilon)\lambda(D_1)\leq \mu(D_1)\leq (\tilde{C}+\epsilon)\lambda(D_1),\\
	(C'-\epsilon)\lambda(D_5)\leq \mu(D_5)\leq (C'+\epsilon)\lambda(D_5).
	\end{align*}
	We may now estimate $\mu(D)$ as follows
	\begin{equation}
	\label{muDjesti}
	4(\tilde{C}-\epsilon)\lambda(D_1)\leq\mu(D(j))\leq 4(\tilde{C}+\epsilon)\lambda(D_1)+4(C'+\epsilon)\lambda(D_5).
	\end{equation}
	The lower bound is motivated by Figure \ref{Fig4} which suggests that $\lambda(D_5)$ becomes insignificant compared to $\lambda(D_1)$ when $j$ becomes large. We make this precise by computing an upper bound for $\lambda(D_5)$.
	
	\begin{figure}[h]
		\centering
		\includegraphics[width=8cm]{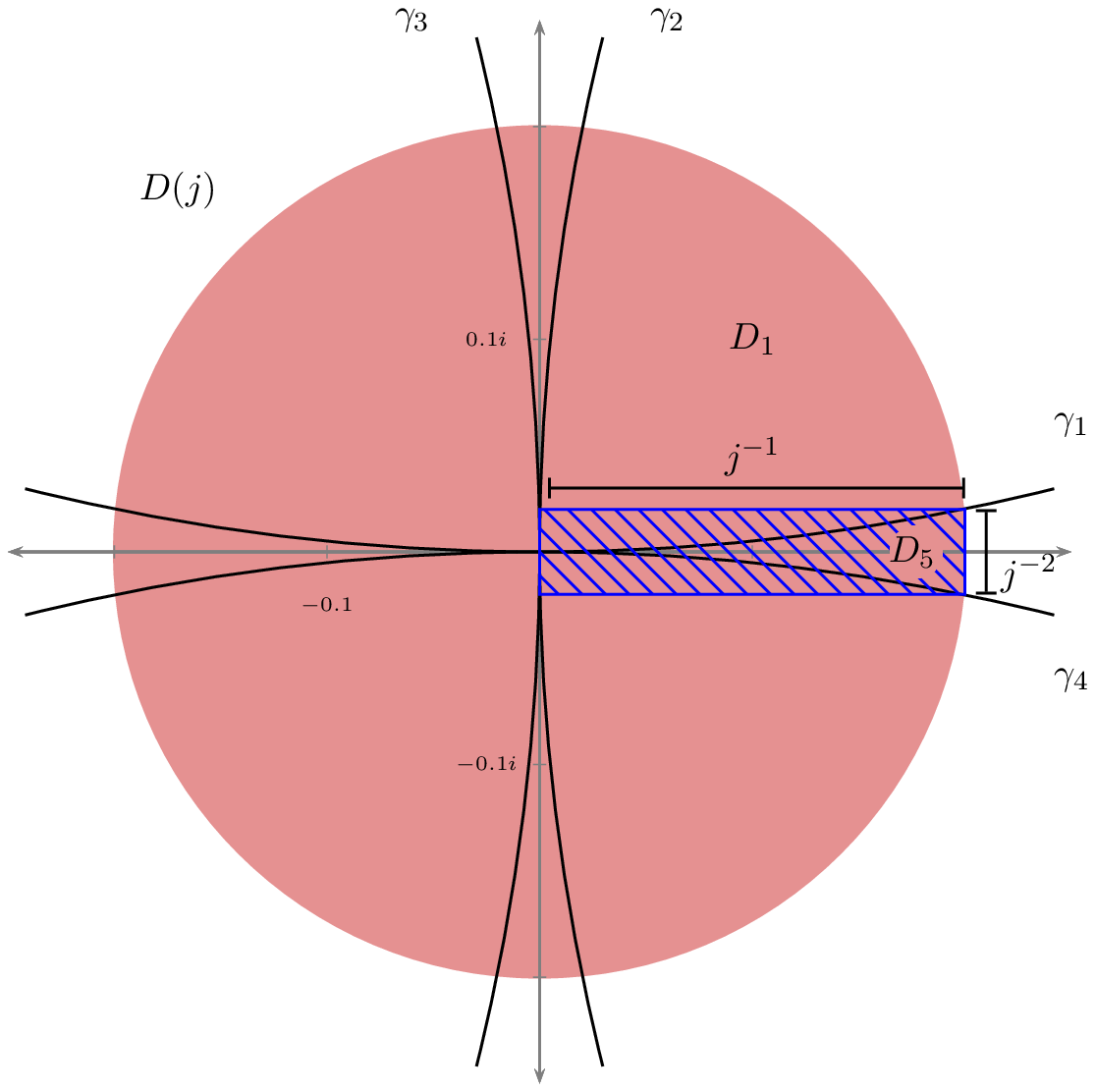}
		\caption{}
		\label{Fig4}
	\end{figure}
	
	We see from Figure \ref{Fig4} that $D_5$ is contained in a rectangle of side length $\frac{1}{j}$ and $\frac{1}{j^2}$. Here the vertical side length is found as the imaginary value of the intersection point between $\gamma_1$ and the boundary of $D$, i.e. solving
	\begin{equation*}
	x^2+(y-1)^2=1\enskip\text{and}\enskip x^2+y^2=\para{\frac{1}{j^2}},
	\end{equation*} 
	leading to $y=\frac{1}{2j^2}$ and side length $\frac{1}{j^2}$. Hence we get the bound
	\begin{equation*}
	\lambda(D_5)\leq \frac{1}{j}\frac{1}{j^2}=\frac{1}{j^3}.
	\end{equation*}
	We also use this bound to estimate $\lambda(D_1)$. Namely, we have $\lambda(D(j))=\pi\frac{1}{j^2}$ and
	\begin{equation*}
	\frac{\pi}{4}\frac{1}{j^2}=\frac14\lambda(D(j))\geq\lambda(D_1)=\frac14\lambda(D(j))-\lambda(D_5)\geq \frac{\pi}{4}\frac{1}{j^2}-\frac{1}{j^3}. 
	\end{equation*}
	Inserting these estimates in \eqref{muDjesti} we get
	\begin{equation*}
	(\tilde{C}-\epsilon)\pi\frac{1}{j^2}-4(\tilde{C}-\epsilon)\frac{1}{j^3}\leq\mu(D(j))\leq (\tilde{C}+\epsilon)\pi\frac{1}{j^2}+4(C'+\epsilon)\frac{1}{j^3}.
	\end{equation*}
	Since we are interested in the asymptotic behaviour as $j\to\infty$ we can pick $\epsilon$ arbitrarily small. Together with \eqref{3rdpower} we get the conclusion that for $H:=\pi\tilde{C}$ we have
	\begin{equation*}
	\mu\tub{z\in B:|a_1(z)| > j} =\frac{H}{j^2}+O\para{\frac{1}{j^3}},\quad \text{ as } j\to\infty.
	\end{equation*}
	This completes the proof of the lemma.
\end{proof}
\begin{remark}
	We repeat the point here that followed \eqref{eq:DensityForm}, namely that the constant $H$ could potentially be made explicit if one could find a way of computing the Lebegue measure of the sets $V_i$ which are defined and studied in \cite{EiItoNakadaNatsui} and \cite{HiaryVandehey}. 
\end{remark}

\begin{proof}[Proof of Theorem \ref{thm:main1}]
	As stated earlier, to finish the proof of Theorem \ref{thm:main1} we need to show that condition \eqref{DIG1} in Theorem \ref{IosifescuPoisson} is satisfied. However, at this stage we only need to observe that by choosing the sequence $u_n(r):=Cr\sqrt{n}$ with $C:=\sqrt{H}$ we get
	\begin{align*}
	\lim_{n\to\infty} n\mu\tub{\num{a_1}>Cr\sqrt{n}}&=\lim_{n\to\infty} n\para{\frac{H}{\para{Cr\sqrt{n}}^2}+O\para{\frac{1}{\para{Cr\sqrt{n}}^3}}}\\
	&=\frac{1}{r^2}+\lim_{n\to\infty} O\para{\frac{1}{\sqrt{n}r^3}}\\
	&=\frac{1}{r^2},
	\end{align*}
	and the proof is complete.
\end{proof}


\begin{thebibliography}{BHKV10}
	
	\bibitem{Brezinski}
	\newblock C.\ Brezinski, 
	\newblock History of Continued Fractions and Padé Approximants,
	\newblock \emph{vol. 12 of Springer Series in Computational Mathematics, Springer, Berlin, Germany}, (1991).
	
	\bibitem{ChangChen}
	\newblock J.\ Chang and H.\ Chen,
	\newblock Slow increasing functions and the largest partial quotients in continued fraction expansions,
	\newblock \emph{Math. Proc. Cambridge Philos. Soc.} \textbf{164} (2018), no. 1, 1--14. 
	
	\bibitem{ChangMa}
	\newblock Y.\ Chang and J.\ Ma,
	\newblock Some distribution results of the Oppenheim continued fractions,
	\newblock \emph{Monatsh. Math.} \textbf{184} (2017), no. 3, 379--399.
	
	\bibitem{DiamondVaaler}
	\newblock H.\ Diamond and J.\ Vaaler,
	\newblock Estimates for partial sums of continued fraction partial quotients,
	\newblock \emph{Pacific J. Math.} \textbf{122} (1986), no. 1, 73--82.
	
	\bibitem{Doeblin}
	\newblock W.\ Doeblin,
	\newblock Remarques sur la th\'eorie m\'etrique des fractions continues,
	\newblock \emph{Compositio Mathematica}, \textbf{7}, (1940), 353--371.
	
	\bibitem{EiItoNakadaNatsui}
	\newblock H.\ Ei, S.\ Ito, H.\ Nakada and R.\ Natsui,
	\newblock On the construction of the natural extension of the Hurwitz complex continued fraction map
	\newblock \emph{Monatsh Math} \textbf{188}, (2019), 37--86.
	
	\bibitem{Galambos}
	\newblock J.\ Galambos,
	\newblock The distribution of the largest coefficient in continued fraction expansions,
	\newblock \emph{Quart. J. Math.}, \textbf{23} (1972), 147--151.
	
	\bibitem{Galambos2}
	\newblock J.\ Galambos,
	\newblock The largest coefficient in continued fractions and related problems,
	\newblock \emph{Diophantine approximation and its applications (Proc. Conf., Washington, D.C., 1972), Academic Press, New York}, (1973), 101--109.
	
	\bibitem{Galambos3} 
	\newblock J.\ Galambos,
	\newblock An iterated logarithm type theorem for the largest coefficient in continued fractions,
	\newblock \emph{Acta Arith.} \textbf{25} (1973/74), 359--364.
	
	\bibitem{GhoshKirsebomRoy}
	\newblock A.\ Ghosh, M.\ Kirsebom, and P.\ Roy,
	\newblock Continued fractions, the Chen-Stein method and extreme value theory,
	\newblock \emph{Ergodic Theory Dynam. Systems} \textbf{41} (2021), no. 2, 461--470.
	
	\bibitem{Hensley}
	\newblock D.\ Hensley, 
	\newblock Continued Fractions,
	\newblock \emph{World Scientific Publishing Co. Pte. Ltd., Hackensack, NJ,} (2006).
	
	\bibitem{HiaryVandehey} 
	\newblock G.\ Hiary and J.\ Vandehey,
	\newblock Calculations of the invariant measure for Hurwitz continued fractions
	\newblock\emph{preprint, arXiv:1805.10151v2}.
	
	\bibitem{Hurwitz} 
	\newblock A.\ Hurwitz, 
	\newblock \"Uber die Entwicklung complexer Gr\"ossen in Kettenbr\"uche (German),
	\newblock \emph{Acta Math.}, \textbf{11}, (1887), 187--200.
	
	\bibitem{Iosifescu1} 
	\newblock M.\ Iosifescu, 
	\newblock A Poisson law for $\psi$-mixing sequences establishing the truth of a Doeblin's
	statement,
	\newblock \emph{Rev. Roumaine Math. Pures Appl.}, \textbf{22} (1977), 1441--1447.
	
	\bibitem{Iosifescu2} 
	\newblock M.\ Iosifescu, 
	\newblock Doeblin and the metric theory of continued fractions: a functional-theoretic solution to Gauss' 1812 problem,
	\newblock \emph{Doeblin and modern probability (Blaubeuren, 1991), Contemp. Math., 149, Amer. Math. Soc., Providence, RI}, (1993), 97--110.
	
	\bibitem{LukyanenkoVandehey}
	\newblock A.\ Lukyanenko and J.\ Vandehey,
	\newblock Ergodicity of Iwasawa continued fractions via markable hyperbolic geodesics,
	\newblock \emph{preprint}, arXiv:1805.09312.
	
	\bibitem{Nakada}
	\newblock H.\ Nakada,
	\newblock On the Kuzmin's theorem for the complex continued fractions, 
	\newblock \emph{Keio Engineering reports} (1976), 93--108.
	
	\bibitem{NakadaNatsui}
	\newblock H.\ Nakada and R.\ Natsui,
	\newblock On the metrical theory of continued fraction mixing fibred systems and its application to Jacobi-Perron algorithm.
	\newblock \emph{Monatsh Math}, \textbf{138}, (2003), 267--288.
	
	\bibitem{Philipp1}
	\newblock W.\ Philipp,
	\newblock A conjecture of Erd\H{o}s on continued fractions,
	\newblock \emph{Acta Arith.} \textbf{28} (1975/76), no. 4, 379--386.
	
	\bibitem{Rieger1}
	\newblock G.\ Rieger,
	\newblock Mischung und Ergodizit\"at bei Kettenbr\"uchen nach n\"achsten Ganzen (German),
	\newblock \emph{J. Reine Angew. Math.} \textbf{310} (1979), 171--181. 
	
	\bibitem{Rieger2}
	\newblock G.\ Rieger,
	\newblock Ein Gauss-Kusmin-Levy-Satz f\"ur Kettenbr\"uche nach n\"achsten Ganzen. (German)
	\newblock \emph{Manuscripta Math.}, \textbf{24}, (1978), no. 4, 437--448. 
	
	\bibitem{Robert1}
	\newblock G.\ González Robert,
	\newblock A complex Borel-Bernstein theorem,
	\newblock \emph{arXiv:2104.05129}, preprint, 2021.
	
	\bibitem{ShenXuJing}
	\newblock L.\ Shen, J.\ Xu, and H.\ Jing,
	\newblock On the largest degree of the partial quotients in continued fractions expansions over the field of formal Laurent series,
	\newblock \emph{International Journal of Number Theory}, \textbf{9}, (2013), no. 5, 1237--1247.
	
	\bibitem{Schweiger1}
	\newblock F.\ Schweiger,
	\newblock Ergodic theory of fibred systems and metric number theory,
	\newblock Oxford Science Publications. The Clarendon Press, Oxford University Press, New York, (1995).
	
	\bibitem{Schweiger2}
	\newblock F.\ Schweiger, 
	\newblock Multidimensional continued fractions, 
	\newblock Oxford Science Publications. Oxford University Press, Oxford, (2000).
	
	\bibitem{Schweiger3}
	\newblock F.\ Schweiger,
	\newblock Kuzmin’s theorem revisited, 
	\newblock \emph{Ergodic Theory Dyn. Syst.} \textbf{20}(2), (2000), 557--565.
	
	\bibitem{Schweiger4}
	\newblock F.\ Schweiger,
	\newblock A new proof of Kuzmin’s theorem,
	\newblock \emph{Rev. Roum. Math. Pures Appl.} \textbf{56}(3), (2011), 229--234.
	
	\bibitem{Schweiger5}
	\newblock F.\ Schweiger,
	\newblock Metrische Theorie einer Klasse zahlentheoretischer Transformationen (German), 
	\newblock \emph{Acta Arith.} \textbf{15}, (1968), 1--18.	
	
	\bibitem{Waterman}
	\newblock M.\ Waterman,
	\newblock Some ergodic properties of multi-dimensional $F$-expansions.
	\newblock \emph{Z Wahr Verw Gebiete} (1970), \textbf{16}, 77--103.
	
	\bibitem{Zweimuller}
	\newblock R.\ Zweim\"uller,
	\newblock Hitting times and positions in rare events,
	\newblock \emph{preprint, arXiv:1810.10381v3}.
	
	
	
\end{thebibliography}
\end{document}